\newtheorem{theorem}{Theorem}[section]
\newtheorem{definition}[theorem]{Definition}
\newtheorem{cor}[theorem]{Corollary}
\newtheorem{lem}[theorem]{Lemma}
\newtheorem*{notation*}{Notation}
{\theoremstyle{definition}

}
\newtheorem{pro}[theorem]{Proposition}
\numberwithin{equation}{section}
{\theoremstyle{definition}
\newtheorem{remark}[theorem]{Remark}
}
\newcommand{\N}{\mathcal{N}}
\DeclareMathOperator{\Grass}{Gr}
\newcommand{\Gr}{\Grass}
\newcommand{\D}{\mathcal{D}}
\newcommand{\C}{\mathbb{C}}
\newcommand{\R}{\mathbb{R}}
\newcommand{\Coreof}{\mathfrak{C}}
\newcommand{\ssubset}{\subset\joinrel\subset}
\title{The Levi $q$-core and Property ($P_q$)}
\author{Gian Maria Dall'Ara}
\address{Istituto Nazionale di Alta Matematica ``Francesco Severi"\\ Research Unit Scuola Normale Superiore\\
	Piazza dei Cavalieri, 7 - 56126, Pisa (Italy)}
\email{dallara@altamatematica.it}
\author{Samuele Mongodi}
\address{Dipartimento di Matematica e Applicazioni\\
	Università degli Studi di Milano-Bicocca\\
	Via Roberto Cozzi, 55 - 20126 Milano (Italy)}
\email{samuele.mongodi@unimib.it}
\author{John N. Treuer}
\address{Department of Mathematics\\ 
	University of California \\
	San Diego, La Jolla, CA 92093, USA}
\email{jtreuer@ucsd.edu}
\begin{document}

\maketitle

\begin{abstract}
We introduce the Grassmannian $q$-core of a distribution of subspaces of the tangent bundle of a smooth manifold. This is a generalization of the concept of the core previously introduced by the first two authors. In the case where the distribution is the Levi null distribution of a smooth bounded pseudoconvex domain $\Omega\subseteq \mathbb{C}^n$, we prove that for $1 \leq q \leq n$, the support of the Grassmannian $q$-core satisfies Property $(P_q)$ if and only if the boundary of $\Omega$ satisfies Property $(P_q)$. This generalizes a previous result of the third author in the case $q=1$. The notion of the Grassmannian $q$-core offers a perspective on certain generalized stratifications appearing in a recent work of Zaitsev.
\end{abstract}

\renewcommand{\thefootnote}{\fnsymbol{footnote}}
\footnotetext{\hspace*{-7mm} 
\begin{tabular}{@{}r@{}p{16.5cm}@{}}
& Keywords:  Property ($P_q$), Levi core, $\overline{\partial}$-Neumann problem, compactness\\
& Mathematics Subject Classification: Primary 32W05  Secondary: 32T99 


\end{tabular}

\noindent\thanks{The third author is supported in part by the NSF grant DMS-2247175 Subaward M2401689. }

\noindent \date
}

\section{Introduction}
Let $\Omega$ be a bounded pseudoconvex domain with $C^{\infty}$-boundary $b\Omega$, and for $q \in \{1, \ldots, n\}$, let $L^2_{(0, q)}(\Omega)$ denote the square-integrable $(0, q)$-forms on $\Omega$.  One of the guiding questions for significant research in the $\overline{\partial}$-Neumann problem is:~When is the $\overline{\partial}$-Neumann operator on $(0, q)$-forms, $N_q:L^2_{(0, q)}(\Omega) \to L^2_{(0, q)}(\Omega)$, compact?  The $q = 1$ case was notably studied by Catlin \cite{Cat84} who developed a potential theoretic condition, called Property ($P$), which when satisfied by the compact set $b\Omega$ guarantees that $N = N_1$ is compact. Consequently, $N_q$ is compact for all other values of $q$ since for $q < n$, the operator $N_{q + 1}$ is compact whenever $N_q$ is compact.  Property ($P$) was studied in the context of Choquet theory by Sibony \cite{Si87}, under the name of $B$-regularity, and later was generalized to Property ($P_q)$ for $q \in \{1,\ldots, n\}$ (see \cite{FuSt01}).

\begin{definition}[Property ($P_q)$]
Given a compact set $X\subseteq \C^n$, we say that $X$ satisfies Property $(P_q)$ if for any given $M>0$, there exist a neighborhood $U$ of $X$ and a $C^2$ function $\phi:U\to[0,1]$ such that for any $z\in U$ the sum of the $q$ smallest eigenvalues of the Hermitian matrix 
$$L_\phi(z)=\left(\frac{\partial^2\phi(z)}{\partial z_j\partial\bar{z}_k}\right)_{j, k}$$ 
is at least $M$.
\end{definition}
\noindent{}The definition of Property ($P$) is the same as that of Property ($P_1)$. For $q > 1$, it remains true that if $b\Omega$ satisfies Property $(P_q)$, then the $\overline{\partial}$-Neumann operator $N_q$ is compact, \cite{FuSt01, St10}. 

Dall'Ara and Mongodi introduced in \cite{DaMo21} the Levi core of a smooth pseudoconvex domain for studying the $\overline{\partial}$-Neumann operator $N_1$.  This was used by Treuer \cite{Tr22} to give a sufficient condition for when Property ($P$) holds on $b\Omega$.

\begin{theorem}[\protect{\cite[Theorem 1.1]{Tr22}}]\label{Treuer theorem}
Let $\Omega$ be a bounded pseudoconvex domain with $C^{\infty}$ boundary.  The support of the Levi core satisfies Property ($P$) if and only if $b\Omega$ satisfies Property ($P$).
\end{theorem}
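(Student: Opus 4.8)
The forward implication is a triviality: if $b\Omega$ satisfies Property ($P$), then for a prescribed $M$ we are handed a neighborhood $U\supseteq b\Omega$ and a $C^2$ function $\phi\colon U\to[0,1]$ with $L_\phi\ge M$, and since the support of the Levi core is a compact subset of $b\Omega$, restricting $\phi$ to a smaller neighborhood of that support shows Property ($P$) for it. The substance is the converse. I would reduce it using the fact that Property ($P$) coincides with Sibony's $B$-regularity and is, in particular, a \emph{local} property of compact sets (\cite{Si87,FuSt01}): $b\Omega$ satisfies Property ($P$) as soon as every $p\in b\Omega$ has a neighborhood $V$ in $\C^n$ with $b\Omega\cap\overline V$ satisfying Property ($P$). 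So it suffices to verify this at each boundary point.

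If $p$ lies outside $\operatorname{supp}\core$, shrink $V$ so that $\overline V$ avoids this closed set; then $b\Omega\cap\overline V$ is a compact subset of $b\Omega\setminus\operatorname{supp}\core$, and it remains to show that compact pieces of the complement of the core satisfy Property ($P$). This is where the structure of the Levi core from \cite{DaMo21} enters: $\operatorname{supp}\core$ is obtained from the set of weakly pseudoconvex boundary points by a transfinitely iterated ``core-reduction'' step, which at stage $S_\alpha$ discards the points from which the Levi-null directions $\N$ permit an escape to $b\Omega\setminus S_\alpha$ (in the precise sense of \cite{DaMo21}), with $S_\lambda=\bigcap_{\alpha<\lambda}S_\alpha$ at limits. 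Accordingly I would construct, for each prescribed $M$, a $C^2$ function $\phi$ with $0\le\phi\le 1$ and $L_\phi\ge M$ on a neighborhood of a given compact piece of $b\Omega\setminus\operatorname{supp}\core$ by transfinite induction on the stage at which its points are discarded: at strictly pseudoconvex points use the usual defining-function construction; at a successor stage extend the functions already built on $b\Omega\setminus S_\alpha$ to cover the newly discarded points by propagating their plurisubharmonicity backwards along the Levi-null directions --- in the spirit of the Diederich--Forn\ae ss and Catlin constructions of bounded plurisubharmonic functions --- and patch to the previous stage's function by a Catlin-type maximum-with-cutoff, rescaled so as to stay in $[0,1]$ at the cost of only a controlled fraction of the Hessian bound and then smoothed back to $C^2$; at a limit stage pass to the limit by compactness. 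Finally, if $p\in\operatorname{supp}\core$, decompose $b\Omega\cap\overline V$ into $\operatorname{supp}\core\cap\overline V$, which satisfies Property ($P$) by hypothesis and heredity, together with compact pieces of the complement handled above, and glue the hypothesised function on the core to those on the complement by the same maximum-with-cutoff device.

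The step I expect to be the main obstacle is exactly this gluing --- the transfinite patching across the ordinal-indexed stages, and the final patching to the core's function across the interface $\operatorname{supp}\core$. Keeping the functions uniformly bounded by $1$ and plurisubharmonic while the Hessian lower bound does not deteriorate over possibly infinitely many amalgamations demands careful control of the interplay between the desired gain $M$ and the errors introduced by the cutoffs, together with a guarantee that the data produced at successive ordinal stages are compatible on overlaps and degenerate in the correct way as one approaches $\operatorname{supp}\core$. A possible streamlining, suggested by the Choquet-theoretic picture of $B$-regularity, is to argue on the dual side: show that every Jensen measure carried by $b\Omega$ is in fact carried by $\operatorname{supp}\core$, so that $B$-regularity of $\operatorname{supp}\core$ forces $b\Omega$ to carry only trivial Jensen measures; but proving that Jensen measures concentrate on the core appears to require essentially the same analysis of the Levi-null geometry.
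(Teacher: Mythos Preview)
Your outline has a genuine gap at the heart of it: the ``propagation backwards along Levi-null directions'' step, together with the transfinite max-with-cutoff patching, is not a workable mechanism as stated. Propagating a bounded plurisubharmonic function with large Hessian along null directions of the Levi form is not a standard construction, and the Diederich--Forn\ae ss/Catlin machinery you invoke does something rather different (it produces exhaustion functions, or functions with controlled degeneration near the boundary, not functions with \emph{uniform} Hessian lower bound on prescribed compact pieces). More seriously, even granting some version of that step, the transfinite patching you describe would require uniform control of the Hessian loss at each of possibly uncountably many ordinal stages, and your limit-ordinal step (``pass to the limit by compactness'') does not make sense as written: a decreasing net of bounded plurisubharmonic functions need not converge to anything with the required Hessian bound.

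The paper (specializing its proof of Theorem~\ref{main theorem} to $q=1$) bypasses all of this by never constructing functions directly. The two ingredients you are missing are: (i) Sibony's \cite[Proposition~1.12]{Si87}, which says that any compact subset of a submanifold $F\subseteq b\Omega$ with $\C T_pF\cap\N_p=\{0\}$ for all $p\in F$ automatically satisfies Property~($P$); and (ii) Sibony's countable-union theorem \cite[Proposition~1.9]{Si87} (Theorem~\ref{countable union theorem} here), which says that a compact set covered by countably many compact ($P$)-sets is itself ($P$). The transfinite stratification $\{S_\alpha\setminus S_{\alpha+1}\}$ is used only to observe (Proposition~\ref{pro_holdimlessq}) that each stratum is locally contained in a submanifold $F$ to which (i) applies; one then covers $b\Omega$ by countably many compact pieces of such strata together with the core support, and (ii) does \emph{all} of the gluing in one stroke. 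No explicit functions, no ordinal-indexed patching, no limit passage. Your Choquet-theoretic afterthought is in fact closer to the truth than your main proposal: the countable-union theorem is proved precisely via Jensen measures, and that is where the real analytic content lies.
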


We refer to \cite{DaMo21} for the definition of the Levi core and of its support. As a corollary of Theorem \ref{Treuer theorem}, if Property ($P$) holds on the support of the Levi core, then the $\overline{\partial}$-Neumann operator $N_1$, and $N_q$ for all $q \in \{1,\ldots, n\}$, is compact. The purpose of this note is to define and study a generalization of the Levi core, the Levi $q$-core, which can be used to study the $\overline{\partial}$-Neumann operator on the $(0, q)$-forms, $N_q$.  Our main theorem is an analogue of Theorem \ref{Treuer theorem} for the Levi $q$-core.  

\begin{theorem}\label{main theorem}
Let $\Omega$ be a bounded pseudoconvex domain with $C^{\infty}$ boundary.  The support of the Levi $q$-core satisfies Property ($P_q$) if and only if $b\Omega$ satisfies Property ($P_q$).
\end{theorem}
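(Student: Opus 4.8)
One implication is elementary: Property $(P_q)$ is inherited by compact subsets. Indeed, if $U$ is a neighbourhood of a compact set $X$ and $\phi:U\to[0,1]$ realizes $(P_q)$ for $X$ with a prescribed $M$, then the same pair $(U,\phi)$ works for every compact $X'\subseteq X$. Since the support of the Levi $q$-core is a compact subset of $b\Omega$, it follows that $b\Omega$ satisfying $(P_q)$ forces the support of the Levi $q$-core to satisfy $(P_q)$. All the content lies in the converse.

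Assume then that $K:=\operatorname{supp}(\text{Levi }q\text{-core})$ satisfies $(P_q)$ and fix $M>0$; we seek a neighbourhood $U$ of $b\Omega$ and a $C^2$ map $\phi:U\to[0,1]$ whose $q$ smallest Levi eigenvalues sum to at least $M$ on all of $U$. The first step is to extract from the definition of the Levi $q$-core (the Grassmannian $q$-core of the Levi null distribution $\N$) the structural fact that $b\Omega\setminus K$ is ``$(P_q)$-good'': there is a compact exhaustion $b\Omega\setminus K=\bigcup_{m\geq 1}F_m$, with $F_m$ contained in the relative interior of $F_{m+1}$ and produced by the successive stages of the (transfinite) construction of the $q$-core, together with, for each $m$, a smooth function $\psi_m:V_m\to[0,1]$ on a neighbourhood $V_m$ of $F_m$ whose $q$ smallest Levi eigenvalues are $\geq M$ near $F_m$. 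The step from the case $q=1$ of Theorem~\ref{Treuer theorem} enters exactly here: since the Grassmannian $q$-core by design follows $q$-dimensional subspaces of $\N$ rather than the whole distribution, one must verify that a point's removal at some stage of the construction is witnessed by a local function controlling the \emph{sum} of the $q$ smallest Levi eigenvalues and not merely the smallest one.

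The heart of the argument is then to glue $\phi_K$ — the function supplied by $(P_q)$ on $K$, say for the target $M+1$ — with the family $\{\psi_m\}$ on $b\Omega\setminus K$ into a single function on a neighbourhood of all of $b\Omega$, keeping the bound $\geq M$ on the sum of the $q$ smallest eigenvalues and staying inside $[0,1]$. The difficulty is localized near the relative boundary of $K$ in $b\Omega$, because $b\Omega\setminus K$ is open but not compact, so a finite partition of unity is not available. Following Catlin's construction of functions with large complex Hessian (as in the proof that Property $(P)$ yields compactness) and its refinement in the $q=1$ case of Theorem~\ref{Treuer theorem}, I would process the levels $F_1\subseteq F_2\subseteq\cdots$ one at a time, gluing on overlaps by regularized maxima of Richberg type and attaching successive pieces with rapidly decreasing weights $\varepsilon_m>0$ chosen so that the cutoff-derivative errors and the tails of the resulting series are negligible against $M$ while every point of $b\Omega\setminus K$ still receives a full-strength contribution, and so that the construction degenerates to $\phi_K$ alone as one approaches $K$. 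What makes the bookkeeping succeed is that $A\mapsto\sum_{j=1}^{q}\lambda_j(A)$ is concave and nondecreasing on Hermitian matrices: it is superadditive on sums of positive-semidefinite terms, is not decreased by the addition of a nonnegative Hermitian term, and is preserved under convex combinations and regularized maxima — precisely the stability used for the single smallest eigenvalue when $q=1$.

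Granting the patched $\phi:U\to[0,1]$ with $\sum_{j=1}^{q}\lambda_j(L_\phi)\geq M$ on a neighbourhood $U$ of $b\Omega$, and letting $M$ run over all positive reals, we conclude that $b\Omega$ satisfies $(P_q)$. I expect the real obstacles to be the two flagged above — the patching near the relative boundary of $K$, and the verification that the $q$-core construction delivers local functions controlling the sum of the $q$ smallest eigenvalues — the weight-selection bookkeeping being routine once those are secured.
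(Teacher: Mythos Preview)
Your trivial direction is fine. For the converse, the paper's route is quite different from yours, and your proposal has a real gap.

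The paper does no explicit gluing at all. From the transfinite stratification $b\Omega=\bigcup_\alpha(S_\alpha\setminus S_{\alpha+1})\cup K$ it extracts, via Proposition~\ref{pro_holdimlessq}, around each point of each stratum a local submanifold $F$ with $\dim_\C(\C T_yF\cap\N_y)<q$; compact subsets of such an $F$ satisfy $(P_q)$ by a result of Sibony/Straube (Proposition~\ref{pro:hol dim<q implies Pq}). One then covers $b\Omega\setminus K$ by countably many such compact pieces $X^\alpha_{j,k}$, writes $b\Omega=\bigl(\bigcup X^\alpha_{j,k}\bigr)\cup K$, and invokes the countable-union theorem for $(P_q)$ (Theorem~\ref{thm:Countable union of Pq}). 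That theorem is the load-bearing step; its proof is Choquet-theoretic (Jensen measures, Edwards' duality, Baire category), and the paper reproduces it in an appendix precisely because no direct patching proof is available.

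Your gap is the existence of the functions $\psi_m$ on the compacta $F_m$ exhausting $b\Omega\setminus K$. The $q$-core construction supplies only \emph{pointwise-local} data: a submanifold of small holomorphic dimension near each point of each stratum. A fixed compact $F_m\subseteq b\Omega\setminus K$ may meet infinitely many strata and require infinitely many such local submanifolds to cover it, so passing from that local picture to a single $\psi_m$ realizing $(P_q)$ on all of $F_m$ is already the content of the countable-union theorem. Your Richberg-type regularized maxima handle finitely many overlaps, and the concavity of $A\mapsto\sum_{j\le q}\lambda_j(A)$ is indeed the correct stability property, but neither controls an infinite family of cutoffs without further input. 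And once you do grant Theorem~\ref{thm:Countable union of Pq}, the second gluing you describe (of the $\psi_m$'s to $\phi_K$ across $\partial K$) becomes redundant: $b\Omega=K\cup\bigcup_m F_m$ is itself a countable union of compact $(P_q)$ sets and the theorem applies directly.
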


See Section \ref{q-core section} for the definition of the Levi $q$-core and of its support. The proof of Theorem \ref{main theorem} is also in Section \ref{q-core section}. In Section \ref{Zaitsev section}, we discuss a connection of the Levi $q$-core with certain generalized stratifications used by Zaitsev in the recent work \cite{Za24}, containing a novel approach to compactness in the $\overline\partial$-Neumann problem. Finally, we highlight the fact that in the proof of Theorem \ref{main theorem} we use the following result.

\begin{theorem}[\protect{\cite[Proposition 1.9]{Si87}} for $q=1$, \protect{\cite[Corollary 4.14]{St10}} for any $q$]\label{countable union theorem}\label{thm:Countable union of Pq} Let $X$ be compact and suppose that $X = \cup_{k=1}^{\infty} X_k$ where each $X_k$ is compact and satisfies Property ($P_q$).  Then $X$ satisfies Property $(P_q)$.
\end{theorem}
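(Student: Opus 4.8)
The plan is to reduce the statement to a single quantitative construction and then glue, exploiting that the relevant functional is concave. Write $s_q(A)$ for the sum of the $q$ smallest eigenvalues of a Hermitian matrix $A$; by Ky Fan's variational formula $s_q(A)=\min\{\operatorname{tr}(PA):P=P^2=P^{\ast},\ \operatorname{rank}P=q\}$, so $s_q$ is concave, monotone in the positive‑semidefinite order, and $s_q(A+cI)=s_q(A)+qc$. Consequently the $C^2$ functions $u$ with $s_q(L_u)\ge 0$ (call them $q$‑plurisubharmonic) form a convex cone closed under addition and, via the pointwise identity $L_{\max_\eta(u,v)}\ge \alpha L_u+\beta L_v$ with $\alpha+\beta=1$, $\alpha,\beta\ge 0$, closed under the regularized maximum $\max_\eta$; in fact $s_q(L_{\max_\eta(u,v)})\ge\min(s_q(L_u),s_q(L_v))$ pointwise.

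The key preliminary step is a reformulation of Property $(P_q)$: a compact $K\subseteq\C^n$ satisfies $(P_q)$ if and only if every $f\in C(K,\R)$ is a uniform limit on $K$ of $C^2$ functions $u$, defined on neighbourhoods of $K$, with $s_q(L_u)\ge 0$ (``$B_q$‑regularity''). For the forward direction, given $\epsilon>0$ approximate $f$ within $\epsilon/2$ on $K$ by a smooth $\tilde f$ (e.g.\ a polynomial in $z,\bar z$), fix $B$ with $L_{\tilde f}\ge -BI$ near $K$, use $(P_q)$ to get $0\le\psi\le 1$ with $s_q(L_\psi)\ge 2qB/\epsilon$ near $K$, and set $u:=\tilde f+\tfrac\epsilon2\psi$; then $s_q(L_u)\ge s_q(L_{\tilde f})+\tfrac\epsilon2 s_q(L_\psi)\ge -qB+qB=0$ near $K$ while $|u-f|<\epsilon$ on $K$. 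For the converse, given $M>0$ approximate $-\tfrac{4M}{q}|z|^2$ within $1$ on $K$ by such a $u$, put $w:=u+\tfrac{4M}{q}|z|^2$, note $s_q(L_w)=s_q(L_u)+4M\ge 4M$ near $K$ and $|w|<2$ on a small neighbourhood $U$, so $\tfrac14(w+2)$ witnesses $(P_q)$ for the parameter $M$ on $U$.

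Granting this, let $X=\bigcup_{k\ge 1}X_k$ with each $X_k$ compact and $(P_q)$; after replacing $X_k$ by $X_1\cup\cdots\cup X_k$ (the finite‑union case being handled first by the same but simpler gluing) we may assume $X_k\uparrow X$. Dualizing the reformulation via Hahn--Banach separation of the closed convex cone of $q$‑plurisubharmonic functions, $(P_q)$ for a compact $K$ is equivalent to the statement that the only signed measure $\sigma$ on $K$ with $\int u\,d\sigma\le 0$ for every $u$ with $s_q(L_u)\ge 0$ near $K$ is $\sigma=0$ (note $\sigma(K)=0$ automatically, since constants are admissible). Suppose then that there is a nonzero such $\sigma$ on $X$; its restrictions satisfy $\sigma_k:=\sigma|_{X_k}\to\sigma$ in total variation, and for every $u$ with $s_q(L_u)\ge 0$ near $X$ one gets $\int u\,d\sigma_k\le\|u\|_\infty\,|\sigma|(X\setminus X_k)$ with $|\sigma|(X\setminus X_k)\to 0$.

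The remaining and hardest task is the localization: to convert these ``almost annihilating'' measures $\sigma_k$ into a genuine nonzero measure on some $X_k$ annihilating the cone for $X_k$, contradicting its $(P_q)$‑ness. The obstacle is precisely that a function with $s_q(L_u)\ge 0$ near $X_k$ need not extend $q$‑plurisubharmonically past $X_k$ — one cannot ``cut off'' a $q$‑plurisubharmonic function, because the hypothesis supplies no control on the derivatives of the witnessing potentials — so the test‑function class for $X_k$ is genuinely richer than what the restriction of $\sigma$ controls directly, and simultaneously the defect $|\sigma|(X\setminus X_k)$ must be upgraded from ``small'' to ``zero''. I would resolve both points exactly as in Sibony \cite{Si87} and Straube \cite{St10}, by a Baire‑category and weak‑$\ast$‑compactness argument that reconciles the two cones and drives the defect to $0$; this is the heart of the proof, and is the step I expect to require the most care.
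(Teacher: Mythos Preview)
Your preliminary material is correct and nicely organized: the Ky Fan formula yielding concavity of $s_q$, the stability of the $q$-plurisubharmonic cone under regularized maxima, and the equivalence of Property~$(P_q)$ with ``$B_q$-regularity'' (uniform approximation of continuous functions by functions with $s_q(L_u)\ge 0$) are all valid and are essentially the characterizations the paper also relies on, stated in \cite[Section~4.5]{St10}.

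The gap is that you have not proved the theorem: you explicitly leave the decisive localization step undone and propose to ``resolve both points exactly as in Sibony and Straube, by a Baire-category and weak-$*$-compactness argument.'' Two problems. First, your dual object is a \emph{signed} measure $\sigma$ annihilating the cone, and your plan is to pass from $\sigma$ to its restrictions $\sigma|_{X_k}$ and somehow upgrade ``almost annihilating'' to ``annihilating for the larger cone of $X_k$.'' Weak-$*$ compactness does not do this: any weak-$*$ limit of the $\sigma|_{X_k}$ is just $\sigma$ again, and there is no mechanism that simultaneously kills the defect $|\sigma|(X\setminus X_k)$ and enlarges the class of test functions. Second, neither Sibony nor Straube argues this way, so the appeal to those references does not close the gap. (Incidentally, the finite-union case you invoke as ``handled first by the same but simpler gluing'' is not simpler; it already requires the full machinery.)

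What the paper actually does is pass to \emph{Jensen measures}: probability measures $\mu$ on $X$, centered at a point $z$, with $u(z)\le\int u\,d\mu$ for all $u\in P_q(X)$. One sets $F=X\setminus\mathrm{int}\,J_q(X)$, where $J_q(X)$ is the set of $z$ whose only Jensen measure is $\delta_z$, and assumes $F\neq\emptyset$ for a contradiction. The two key technical lemmas are: (i) a Jensen measure centered outside $\mathrm{int}\,J_q(X)$ gives zero mass to $\mathrm{int}\,J_q(X)$ (proved by a gluing construction with Edwards' theorem), and (ii) restrictions from $P_q(X)$ to $F$ are dense in $P_q(F)$ (proved by extending positive $q$-subharmonic functions near $F$ across $X$ using the distance-to-$F$ function, which Edwards' theorem shows lies in $P_q(X)$). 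Together these give $J_q(F)\subseteq J_q(X)$. Baire is then applied to $F=\bigcup_k(F\cap X_k)$, producing a ball $\overline{\mathbb B(p,r)}\cap F\subseteq X_k$; Property~$(P_q)$ of $X_k$ forces this set to equal its own Jensen boundary, whence $\mathbb B(p,r)\cap F\subseteq J_q(F)\subseteq J_q(X)$, and one reaches the contradiction $p\in\mathrm{int}\,J_q(X)$. The point is that Jensen measures, unlike your signed measures, carry a center $z$, and lemmas (i)--(ii) supply exactly the ``reconciliation of cones'' and ``defect elimination'' you identified as needed but did not provide.
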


The $q = 1$ case was proved by Sibony \cite{Si87}.  In \cite{FuSt01}, Fu and Straube observed that the $q \geq 1$ cases follow essentially verbatim.  
In the monograph, \cite[Corollary 4.14]{St10}, an alternative proof for the $q \geq 1$ cases is given, but an error is made with the set $A$ defined therein, \cite{St25}.  Recently, the $q \geq 1$ cases of Theorem \ref{thm:Countable union of Pq} was used by Zaitsev in \cite[Proposition 1.17]{Za24} (see also Section \ref{Zaitsev section} below).
Since Theorem \ref{thm:Countable union of Pq} is crucial to the main theorem of this paper and since the $q > 1$ cases are of independent interest, we take the opportunity to give an exposition of its proof in Appendix \ref{Appendix section}.

\section{Grassmannian $q$-core, Levi $q$-core, and the proof of the main theorem}\label{q-core section}

In this section, we present a generalization of the core construction described in \cite[Section 2]{DaMo21}. We formulate the definition and the general results for real tangent distributions; however, they continue to hold in the case of complex tangent distributions.

\medskip

Let $M$ be a smooth manifold and let $q\leq \dim M$ be an integer. Given $p\in M$, we denote by $\Gr_q(TM)_p$ the Grassmannian of $q$-planes in $T_pM$.  The collection of these $q$-Grassmannians together with the projection $\pi$ onto $M$, constitute the $q$-Grassmannian bundle $\pi:\Gr_q(TM)\to M$.

\medskip

Given a set $\mathcal{F}\subseteq\Gr_q(TM)$, we denote its \emph{fiber over $p\in M$} as $\mathcal{F}_p$; that is,
$$\mathcal{F}_p=\mathcal{F}\cap \Gr_q(TM)_p\;.$$
Its \emph{support} is
$$S_{\mathcal{F}}=\{p\in M:\ \mathcal{F}_p\neq\emptyset\}\;.$$

\begin{remark}\label{rmk:support_closed}
   Since Grassmannian bundles have compact fibers, if $\mathcal{F}$ is closed as a set in $\Gr_q(TM)$, then $S_{\mathcal{F}}$ is closed in $M$.
\end{remark}

We say that $\mathcal{D}\subseteq TM$ is a distribution of tangent subspaces if $\mathcal{D}_p=\mathcal{D}\cap T_pM$ is a vector subspace of $T_pM$ for all $p\in M$. The \emph{$q$-Grassmannian distribution induced by $\mathcal{D}$}, denoted $\Gr_q(\mathcal{D})\subseteq \Gr_q(TM)$, is
defined as follows: for $p\in M$,
$$\Gr_q(\mathcal{D})_p:=\Gr_q(\mathcal{D}_p)\;;$$
that is, $\Gr_q(\mathcal{D})_p$ is the set of $q$-planes of $\mathcal{D}_p$, $\Gr_q(\mathcal{D}_p)$.  Notice that the fiber $\Gr_q(\D)_p$ is empty if and only if $\dim \D_p<q$.

\begin{lem}
    If $\D$ is closed in $TM$, then $\Gr_q(\D)$ is closed in $\Gr_q(TM)$.
\end{lem}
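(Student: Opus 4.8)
The plan is to show that the complement of $\Gr_q(\D)$ is open in $\Gr_q(TM)$, working locally in a trivializing chart for the Grassmannian bundle. Fix a point $\Lambda_0 \in \Gr_q(TM)$ that is \emph{not} in $\Gr_q(\D)$, with base point $p_0 = \pi(\Lambda_0)$. Since $\Lambda_0 \notin \Gr_q(\D_{p_0})$, the $q$-plane $\Lambda_0 \subseteq T_{p_0}M$ is not contained in $\D_{p_0}$; equivalently, there is a tangent vector $v_0 \in \Lambda_0$ with $v_0 \notin \D_{p_0}$. I would first record this reformulation: a $q$-plane $\Lambda$ at $p$ lies in $\Gr_q(\D)$ if and only if $\Lambda \subseteq \D_p$, so $\Gr_q(\D)^c$ consists exactly of those $(p,\Lambda)$ for which $\Lambda \not\subseteq \D_p$.

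Next I would set up coordinates. Choose a smooth local frame $e_1,\dots,e_n$ of $TM$ near $p_0$ (here $n=\dim M$), and use the standard local trivialization of $\Gr_q(TM)$: near $\Lambda_0$, a $q$-plane is the graph of a linear map, so it is represented by an $n\times q$ matrix $A$ of full rank (its columns, in the frame $e_i$, span the plane), with two matrices giving the same plane iff they differ by right multiplication by an element of $GL_q$. The key point is that the assignment $(p, [A]) \mapsto$ (the plane spanned by the columns of $A$ at $p$) gives, after choosing a local section of the $GL_q$-action, a smooth map, and near $\Lambda_0$ we may pick a representative $A(p,[A])$ depending smoothly on $(p,[A])$.

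Now I would produce an explicit continuous function detecting membership. Since $\D$ is closed in $TM$, its complement is open; pick an open set $W \subseteq TM$ with $(p_0, v_0) \in W$ and $W \cap \D = \emptyset$. The map sending $(p,\Lambda)$ near $\Lambda_0$ to the vector at $p$ given by $A(p,\Lambda)\,c_0$, where $c_0 \in \R^q$ is a coordinate vector with $A(p_0,\Lambda_0)c_0$ equal to (the coordinates of) $v_0$, is continuous into $TM$ and takes $(p_0,\Lambda_0)$ to $(p_0,v_0) \in W$. Hence there is a neighborhood $\mathcal{U}$ of $\Lambda_0$ in $\Gr_q(TM)$ whose image under this map lies in $W$, so for every $(p,\Lambda) \in \mathcal{U}$ the plane $\Lambda$ contains the vector $A(p,\Lambda)c_0 \notin \D_p$; therefore $\Lambda \not\subseteq \D_p$ and $\mathcal{U} \cap \Gr_q(\D) = \emptyset$. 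This exhibits $\Gr_q(\D)^c$ as open, completing the proof.

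The main obstacle is purely bookkeeping: making precise the local trivialization of the Grassmannian bundle and the smooth (or merely continuous) choice of a spanning matrix $A(p,\Lambda)$, together with the compatibility of the ambient frame $e_i$ with the fibers $T_pM$. There is no real analytic difficulty — the content is just that "being contained in a closed set" is a closed condition and that evaluation of a plane on a fixed coordinate vector is continuous in the bundle coordinates. One could alternatively phrase the argument frame-freely using a metric and orthogonal projections onto $\D_p$, but this would require $\D$ to have locally constant rank or at least some regularity, which is not assumed here; the graph-coordinate approach above avoids any such hypothesis.
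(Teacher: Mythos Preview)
Your argument is correct. Both your proof and the paper's hinge on the same elementary fact---that vectors in a $q$-plane vary continuously with the plane, so membership of the plane in a closed distribution is a closed condition---but the two executions are mirror images of one another. The paper argues sequentially: given $v_n\to v$ in $\Gr_q(TM)$ with corresponding subspaces $V_n\subseteq\D_{p_n}$ and $V\subseteq T_pM$, every $w\in V$ is a limit of $w_n\in V_n\subseteq\D$, whence $w\in\D_p$ by closedness. You instead show the complement is open by fixing a witness vector $v_0\in\Lambda_0\setminus\D_{p_0}$ and extending it continuously to nearby planes via a local Stiefel section. The paper's version is shorter because it sidesteps any explicit trivialization of the Grassmannian bundle and appeals directly to the description of the limit plane; your version makes the local bookkeeping explicit, which is not needed for the proof but does no harm.
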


\begin{proof}
    Let $\{v_n\}_{n\in\mathbb{N}}\subseteq\Gr_q(\D)$ be a sequence of points that converges to $v\in\Gr_q(TM)$. Then $v_n\in \Gr_q(\D_{p_n})$ where $p_n\in M$ converges to some $p\in M$. 
    Let $V_n \subseteq T_{p_n}M$ and $V \subseteq T_pM$ be subspaces corresponding to $v_n$ and $v$ respectively.  In particular, $V$ is the set of all $w \in T_pM$ such that $(p,w)$ is a limit in $TM$ of a sequence of the form $(p_n, w_n)$ with $w_n \in V_n$. Since $(p_n,w_n)\in \mathcal{D}$ and $\D$ is closed, $(p, w)\in \D$ for all $w\in V$, i.e.~$v\in\Gr_q(\D)$.
    
\end{proof}

\begin{remark}\label{rem_dimlessk} It follows by Remark \ref{rmk:support_closed} that for a closed distribution $\mathcal{D}$, we have $ p\not\in S_{\Gr_q(\D)}$ if and only if there exists a neighborhood $U$ of $p$ such that $\dim \D_{p'}<q$ for all $p'\in U$.
  Cf.~also \cite[Proposition 2.3]{DaMo21}. 
\end{remark}

Given a closed set $\mathcal{F}\subseteq\Gr_q(TM)$, its \emph{Grassmannian derived set} is
$$\mathcal{F}'=\mathcal{F}\cap \Gr_q(TS_{\mathcal{F}})\;,$$
where $TS$ is the (``smooth Zariski'') \emph{tangent distribution} to the subset $S\subseteq M$, as defined in \cite[Definition 2.5]{DaMo21}. Since $\mathcal{F}$ is closed, $S_{\mathcal{F}}$ is a closed set in $M$ (by Remark \ref{rmk:support_closed}) and $TS_{\mathcal{F}}$ is a closed distribution supported on $S_{\mathcal{F}}$ (see \cite[Proposition 2.6]{DaMo21}).  Consequently, $\Gr_q(TS_{\mathcal{F}})$ is closed and so is the Grassmannian derived set $\mathcal{F}'$. Given an ordinal $\alpha$, we define
$$\mathcal{F}^\alpha=\left\{\begin{array}{ccl}\mathcal{F}&\textrm{if}&\alpha=0\\(\mathcal{F}^{\beta})'&\textrm{if}&\alpha=\beta+1\\
\displaystyle\bigcap_{\beta<\alpha}\mathcal{F}^\beta&\textrm{if}&\alpha\textrm{ is a limit ordinal.}\end{array}\right.$$

\begin{lem}\label{lem_backtotheroots}
    If $\alpha=\beta+1$, then
    $$\mathcal{F}^\alpha=Gr_q(TS_{\mathcal{F}^\beta})\cap\mathcal{F}.$$
    If $\alpha$ is a limit ordinal, then
    $$\mathcal{F}^\alpha=\bigcap_{\beta<\alpha}Gr_q(TS_{\mathcal{F}^\beta})\cap\mathcal{F}.$$
\end{lem}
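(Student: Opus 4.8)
The plan is to prove both identities by transfinite induction on $\alpha$, reducing everything to the successor case and then to the defining recursion. First I would prove the successor formula. By definition $\mathcal{F}^{\beta+1}=(\mathcal{F}^\beta)'=\mathcal{F}^\beta\cap\Gr_q(TS_{\mathcal{F}^\beta})$, so it suffices to show that $\mathcal{F}^\beta\cap\Gr_q(TS_{\mathcal{F}^\beta})=\mathcal{F}\cap\Gr_q(TS_{\mathcal{F}^\beta})$, i.e.~that the larger set $\mathcal{F}\cap\Gr_q(TS_{\mathcal{F}^\beta})$ is already contained in $\mathcal{F}^\beta$. For this I would first record the monotonicity $S_{\mathcal{F}^\gamma}\supseteq S_{\mathcal{F}^\delta}$ whenever $\gamma\le\delta$ (which follows from $\mathcal{F}^\gamma\supseteq\mathcal{F}^\delta$, itself an easy transfinite induction using that $\mathcal{F}'\subseteq\mathcal{F}$ and that intersections shrink sets), hence the tangent distributions and their Grassmannian bundles are nested in the same way. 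The key containment $\mathcal{F}\cap\Gr_q(TS_{\mathcal{F}^\beta})\subseteq\mathcal{F}^\beta$ is then itself proved by transfinite induction on $\beta$: for $\beta=0$ it is trivial; for $\beta=\gamma+1$ one uses the inductive hypothesis at $\gamma$ together with $S_{\mathcal{F}^{\gamma+1}}\subseteq S_{\mathcal{F}^\gamma}$ (so $\Gr_q(TS_{\mathcal{F}^{\gamma+1}})\subseteq\Gr_q(TS_{\mathcal{F}^\gamma})$) and the definition $\mathcal{F}^{\gamma+1}=\mathcal{F}^\gamma\cap\Gr_q(TS_{\mathcal{F}^\gamma})$; for a limit $\beta$ one intersects the containments at all $\gamma<\beta$ and uses the definition of $\mathcal{F}^\beta$ as the intersection.

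For the limit case of the lemma, if $\alpha$ is a limit ordinal then by definition $\mathcal{F}^\alpha=\bigcap_{\beta<\alpha}\mathcal{F}^\beta$. Every $\beta<\alpha$ has a successor $\beta+1\le\alpha$ (if $\beta+1<\alpha$ this is immediate; and one checks the cofinal successors suffice), so using the already-proved successor formula, $\mathcal{F}^{\beta+1}=\Gr_q(TS_{\mathcal{F}^\beta})\cap\mathcal{F}$, and hence
$$\mathcal{F}^\alpha=\bigcap_{\beta<\alpha}\mathcal{F}^\beta\subseteq\bigcap_{\beta<\alpha}\mathcal{F}^{\beta+1}=\bigcap_{\beta<\alpha}\bigl(\Gr_q(TS_{\mathcal{F}^\beta})\cap\mathcal{F}\bigr)\;.$$
Conversely, $\bigcap_{\beta<\alpha}\bigl(\Gr_q(TS_{\mathcal{F}^\beta})\cap\mathcal{F}\bigr)\subseteq\mathcal{F}^\beta$ for each $\beta<\alpha$ by the key containment above (the term for that particular $\beta$ already lies in $\mathcal{F}^\beta$), so the intersection lies in $\bigcap_{\beta<\alpha}\mathcal{F}^\beta=\mathcal{F}^\alpha$. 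This gives the two inclusions and hence equality.

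I expect the main obstacle to be the bookkeeping in the key containment $\mathcal{F}\cap\Gr_q(TS_{\mathcal{F}^\beta})\subseteq\mathcal{F}^\beta$, in particular making sure the monotonicity statements ($\mathcal{F}^\gamma\supseteq\mathcal{F}^\delta$ and $S_{\mathcal{F}^\gamma}\supseteq S_{\mathcal{F}^\delta}$ for $\gamma\le\delta$) are in place and invoked correctly at the successor and limit steps, and that all the sets involved are closed so that the derived-set operation and the tangent distribution $TS$ are well defined at every stage — this is exactly what the closedness remarks preceding the lemma (Remark \ref{rmk:support_closed} and the cited \cite[Proposition 2.6]{DaMo21}) are for. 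None of the individual steps is hard; the care is entirely in organizing the nested transfinite inductions so that the successor formula is available before the limit case is treated, and in not circularly invoking the lemma on itself.
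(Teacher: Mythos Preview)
Your proposal is correct and is exactly the kind of transfinite induction argument that the paper has in mind: the paper does not give a self-contained proof but simply refers to \cite[Lemma 2.5]{Tr22}, whose argument is the distribution-level analogue of what you wrote (the monotonicity of the $S_{\mathcal{F}^\beta}$ and the key containment $\mathcal{F}\cap\Gr_q(TS_{\mathcal{F}^\beta})\subseteq\mathcal{F}^\beta$ established by transfinite induction). Your limit-case argument is slightly more elaborate than necessary --- once the successor formula is in hand, cofinality of successors below a limit ordinal gives equality in one line --- but nothing you wrote is wrong.
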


\begin{proof}The proof is essentially the same as \cite[Lemma 2.5]{Tr22}, which proved the analogous result for distributions of the tangent bundle.\end{proof}

We extend the definition of the core of a distribution $\mathcal{D}$ of the tangent bundle $TM$ in \cite[Definition 2.10]{DaMo21} to closed subsets $\mathcal{F}$ of the $q$-Grassmannian bundle $\Gr_q(TM)$. As $\{\mathcal{F}^{\alpha}\}$ is a decreasing sequence of closed sets, there is a countable ordinal $\gamma$ such that $\mathcal{F}^\gamma=\mathcal{F}^{\gamma'}$ for all $\gamma'\geq \gamma$ (by \cite[Theorem 6.9]{Ke95}) and we define the \emph{Grassmannian $q$-core} of $\mathcal{F}$ to be 
$$\Coreof(\mathcal{F}):=\mathcal{F}^\gamma.$$ 
The \emph{Grassmannian $q$-core of a closed distribution} $\mathcal{D}\subseteq TM$ is defined as $$\Coreof_q(\mathcal{D}):=\Coreof(\Gr_q(\mathcal{D})).$$ 

\begin{remark}
If $\mathcal{D}$ is a closed distribution, then $\Gr_1(\mathcal{D})'=\Gr_1(\mathcal{D}')$, where $\mathcal{D}'$ is the derived distribution (defined in \cite{DaMo21}). It follows that $\Coreof(\Gr_1(\mathcal{D}))=\Gr_1(\Coreof(\mathcal{D}))$, that is, the Grassmannian $1$-core of $\mathcal{D}$ coincides with the $1$-Grassmannian (that is, the projectivization)
of the core of \cite{DaMo21}. In particular, the support of the core of a distribution from \cite{DaMo21} is the same as the support of its Grassmannian $1$-core. 
    \end{remark}

Let $\D$ be a closed distribution in $TM$ and let $\mathcal{F}=\Gr_q(\D)$. Consider the sequence of Grassmannian derived sets $\{\mathcal{F}^\alpha\}$ and define $S_{-1}=M$, $S_\alpha=S_{\mathcal{F}^\alpha}$. The sets $S_\alpha\setminus S_{\alpha+1}$ are locally closed, and disjoint. Let $A$ be the set of ordinals $\alpha\geq 0$ such that $S_\alpha\neq S_{\alpha+1}$. Then, as in \cite[Lemma 2.8]{Tr22} or \cite[equation (3.1)]{Tr22}, we have the decomposition
\begin{equation}\label{eq_decomp}M=(S_{-1}\setminus S_0)\cup \bigcup_{\alpha\in A} (S_\alpha\setminus S_{\alpha+1})\cup S_{\Coreof_q(\mathcal{D})}\;.\end{equation}
We note that \cite[Lemma 2.8]{Tr22} was proved for the Levi null distribution, but its proof remains true for any closed distribution $\mathcal{D}$.
\begin{pro}\label{pro_holdimlessq}
For $\alpha\in A\cup\{-1\}$, if a point $x\in M$ lies in $S_{\alpha}\setminus S_{\alpha+1}$, then there exists a neighborhood $U_x$ of $x$ and a manifold $F=F_x\subseteq U_x$ such that  $S_\alpha\cap U_x\subseteq F$ and 
\begin{equation}\label{dimensional inequality}
\dim  \D_y \cap T_yF < q, \hbox{ for all } y\in U_x.
\end{equation}
  
\end{pro}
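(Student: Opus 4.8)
The plan is to treat the case $\alpha=-1$ directly and then concentrate on $\alpha\geq 0$, where the content lies. Recall $S_0=S_{\Gr_q(\D)}=\{y\in M:\dim\D_y\geq q\}$ and $S_{-1}=M\supseteq S_0\supseteq S_1\supseteq\cdots$, so that $x\in S_\alpha$ forces $\dim\D_x\geq q$ whenever $\alpha\geq 0$. If $\alpha=-1$, then $x\in M\setminus S_{\Gr_q(\D)}$, so by Remark \ref{rem_dimlessk} there is a neighborhood $U_x$ of $x$ on which $\dim\D_y<q$; taking $F=U_x$ gives $S_{-1}\cap U_x=U_x\subseteq F$ and $\dim(\D_y\cap T_yF)=\dim\D_y<q$ for all $y\in U_x$.

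Now let $\alpha\geq 0$. The first step is to bound the \emph{original} distribution $\D$ near $x$. By Lemma \ref{lem_backtotheroots} applied to the successor $\alpha+1$, and since a $q$-plane lies in two subspaces exactly when it lies in their intersection (so $\Gr_q(A)\cap\Gr_q(B)=\Gr_q(A\cap B)$, used fiberwise), one has $\mathcal{F}^{\alpha+1}=\Gr_q(TS_\alpha)\cap\Gr_q(\D)=\Gr_q(TS_\alpha\cap\D)$, whence
\begin{equation}\label{eq:pq-core-star}
S_{\alpha+1}=S_{\mathcal{F}^{\alpha+1}}=\big\{y\in M:\ \dim\big((TS_\alpha)_y\cap\D_y\big)\geq q\big\},
\end{equation}
a closed set (Remark \ref{rmk:support_closed}). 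Since $x\notin S_{\alpha+1}$, there is a neighborhood $W$ of $x$ on which $\dim\big((TS_\alpha)_y\cap\D_y\big)<q$; at $y=x$, together with $\dim\D_x\geq q$, this gives $(TS_\alpha)_x\subsetneq T_xM$.

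The second step produces $F$ and checks the estimate. By the definition of the tangent distribution (\cite[Definition 2.5]{DaMo21}), $(TS_\alpha)_x$ is the common kernel of the differentials at $x$ of the smooth functions vanishing on $S_\alpha$ near $x$, so its annihilator in $T^*_xM$ is spanned by such differentials; pick $f_1,\dots,f_m$ vanishing on $S_\alpha$ near $x$ with $(df_1)_x,\dots,(df_m)_x$ a basis of this annihilator, so that $\bigcap_i\ker(df_i)_x=(TS_\alpha)_x$ and $m=\dim M-\dim(TS_\alpha)_x\geq 1$. Shrinking $W$ to a neighborhood $U_x$ on which the $df_i$ are pointwise linearly independent and each $f_i$ vanishes on $S_\alpha\cap U_x$, set $F=\{y\in U_x:f_1(y)=\cdots=f_m(y)=0\}$. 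Then $F$ is an embedded codimension-$m$ submanifold of $U_x$, $x\in F$, $S_\alpha\cap U_x\subseteq F$, and $T_yF=E_y:=\bigcap_i\ker(df_i)_y$ for $y\in F$, where $E$ is a smooth subbundle of $TU_x$ of rank $\dim M-m$ with $E_x=(TS_\alpha)_x$. By the first step $\dim(\D_x\cap E_x)<q$, and I would propagate this to a neighborhood by observing that $Z:=\{y\in U_x:\dim(\D_y\cap E_y)\geq q\}$ is closed: if $y_n\to y$ in $U_x$ with $y_n\in Z$, choose, for a fixed Riemannian metric, orthonormal $q$-frames spanning $q$-planes contained in $\D_{y_n}\cap E_{y_n}$; by compactness a subsequence converges to an orthonormal $q$-frame over $y$ whose span lies in $\D_y$ (since $\D$ is closed) and in $E_y$ (since $E$ is closed in $TU_x$), so $y\in Z$. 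Since $x\notin Z$, shrinking $U_x$ to a neighborhood disjoint from $Z$ gives $\dim(\D_y\cap E_y)<q$ for every $y\in U_x$; this is the desired inequality for $y\in F$ (and also for $y\in U_x\setminus F$, reading $T_yF:=E_y$, the tangent to the level set of $(f_1,\dots,f_m)$ through $y$).

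I expect the last step to be the main obstacle. Away from $x$, $T_yF$ is in general strictly larger than $(TS_\alpha)_y$ — because $S_\alpha$ need not be locally a manifold, or simply because $F\supsetneq S_\alpha$ near $y$ — so the neighborhood version of \eqref{eq:pq-core-star} does not by itself bound $\dim(\D_y\cap T_yF)$; the upper semicontinuity of $y\mapsto\dim(\D_y\cap E_y)$, which rests essentially on $\D$ being \emph{closed} and $E$ being a genuine subbundle, is what upgrades the bound at the single point $x$ to a bound on a full neighborhood. It is equally essential that \eqref{eq:pq-core-star} be phrased with $\D$ and not with the derived distribution $\D^\alpha$, since only then is it a constraint on $\D$ itself, as the statement requires.
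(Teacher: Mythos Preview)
Your proof is correct and follows essentially the same approach as the paper's: handle $\alpha=-1$ via Remark~\ref{rem_dimlessk}, use Lemma~\ref{lem_backtotheroots} to obtain $\dim(\D_x\cap (TS_\alpha)_x)<q$, construct a manifold $F$ with $T_xF=(TS_\alpha)_x$ containing $S_\alpha$ locally, and then propagate the dimensional bound by upper semicontinuity of the fiber dimension of a closed distribution. The paper simply cites \cite[Proposition~2.6(c)]{DaMo21} for the construction of $F$ and \cite[Proposition~2.3]{DaMo21} for the semicontinuity, whereas you unpack both steps explicitly; your extension of $TF$ to a genuine subbundle $E$ on all of $U_x$ is a useful clarification of what the inequality $\dim(\D_y\cap T_yF)<q$ is supposed to mean for $y\notin F$.
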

    
\begin{proof}
    The case $\alpha=-1$ follows from Remark \ref{rem_dimlessk}. We now assume that $\alpha \in A$. By Lemma \ref{lem_backtotheroots}, if $x\in S_\alpha\setminus S_{\alpha+1}$ then 
    \begin{equation}\label{equation 1 in prop 3.6}
    \mathcal{F}_x\cap \Gr_q(T_xS_{\alpha})=\emptyset.
    \end{equation} By \cite[Proposition 2.6(c)]{DaMo21}, there exists a neighborhood $U$ of $x$ and a manifold $F\subseteq U$ such that $S_\alpha\cap U\subseteq F$ and $T_xF=T_xS_{\alpha}$.  Recall that $\mathcal{F} = Gr_q(\mathcal{D})$.  Plugging this into \eqref{equation 1 in prop 3.6}, we get
    $$
    \Gr_q(\D_x)\cap \Gr_q(T_xF)=\emptyset.
    $$
    This holds if and only if $\dim \D_x\cap T_xF<q$. Since the dimension of the fibers  of a closed distribution is an  upper semicontinuous function \cite[Proposition 2.3]{DaMo21}, this inequality holds in a neighborhood $U_x\subseteq U$ of $x$.
\end{proof}

\begin{remark}
As stated before, everything can be extended to complex distributions on a real manifold $M$, that is, subsets $\mathcal{D}$ of the complexified tangent bundle $\C TM$ whose fibers are complex linear. In this case, one has to replace tangent distributions with their complexifications, Grassmannians of $q$-planes with Grassmannians of complex $q$-planes (inside the complexified tangent bundle), and dimensions over $\R$ with dimensions over $\C$. In particular, \eqref{dimensional inequality} in Proposition \ref{pro_holdimlessq} becomes $\dim_\C \C T_yF\cap \D_y<q$ for all $y\in U_x$.
\end{remark}

We now specialize the above to the Levi null distribution $\mathcal{N}\subseteq T^{1,0}M$ on the boundary $M$ of a smooth bounded pseudoconvex domain $\Omega$. 

\begin{definition}
The \emph{Levi $q$-core} of $M$ is defined as $\Coreof_q(\mathcal{N})$, the Grassmannian $q$-core of the Levi null distribution $\mathcal{N}$.
\end{definition} We are ready to give the proof of Theorem \ref{main theorem}. We will utilize the following fact. 

\begin{pro}[\protect{cf.~\cite[Proposition 1.12]{Si87}} for $q=1$, \protect{\cite[Proposition 4.15]{St10}} for any $q$]\label{pro:hol dim<q implies Pq}Let $1\leq q \leq n$ and suppose that $S$ is a smooth submanifold of $b\Omega$ such that $\dim_\C(\C T_pS\cap\N_p)<q$ for all $p\in S$. Then any compact subset of $S$ satisfies Property $(P_q)$.
\end{pro}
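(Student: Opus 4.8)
\emph{Proof strategy.} I would prove this by reducing, via Theorem~\ref{thm:Countable union of Pq}, to an explicit construction of weight functions near a single point, after a preliminary stratification of $S$ that is exactly what makes all the quantitative estimates uniform. By Theorem~\ref{thm:Countable union of Pq} it suffices to show that every $p_0\in S$ has a compact neighborhood in $S$ satisfying $(P_q)$. Near $p_0$, both $p\mapsto\dim_{\C}\N_p$ and $p\mapsto\dim_{\C}(\C T_pS\cap T^{1,0}_p\C^n)$ are upper semicontinuous (closedness of $\N$, cf.\ \cite[Proposition 2.3]{DaMo21}), so $S$ near $p_0$ is a finite union of locally closed pieces on each of which both integers are constant; each such piece is a countable union of compacts, so by Theorem~\ref{thm:Countable union of Pq} again we may treat an arbitrary compact subset $\Sigma'$ of a single such piece $\Sigma$.

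On a $\C^n$-neighborhood of $\Sigma'$ choose a smooth defining function $r$ of $\Omega$ (so $\Omega=\{r<0\}$, $dr\neq 0$) and smooth functions $\rho_1,\dots,\rho_d$ cutting out $S$ inside $b\Omega$ with $dr,d\rho_1,\dots,d\rho_d$ independent; put $\rho_0:=r$. Given $M>0$, I would set
$$
\phi \;:=\; -(\varepsilon-r)^{1/2}\;+\;A\sum_{j=0}^{d}\rho_j^{\,2}
$$
on the thin two-sided tube $U=\{\,|\rho_j|<\delta\ (0\le j\le d)\,\}$, with $\varepsilon$ small, $A$ large, and $\delta$ small relative to both. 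Using the sublevel set $\{r<\varepsilon\}$ rather than $\Omega$ itself is what lets $\phi$ be $C^\infty$ on an honest $\C^n$-neighborhood of $b\Omega$; since $\phi$ has arbitrarily small oscillation on $U$, an affine normalization maps it into $[0,1]$ while only multiplying its complex Hessian by a factor $\ge 1$. A direct computation gives, for $z\in U$ and $a\in\C^n$,
$$
L_\phi(a,\bar a)=\tfrac14(\varepsilon-r)^{-3/2}|\langle\partial r,a\rangle|^2+\tfrac12(\varepsilon-r)^{-1/2}L_r(a,\bar a)+2A\sum_{j=0}^{d}\Big(|\langle\partial\rho_j,a\rangle|^2+\rho_j\,\partial\bar\partial\rho_j(a,\bar a)\Big).
$$

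Pseudoconvexity gives $L_r\ge 0$ on the complex tangential directions along $b\Omega$, hence $\ge -o(1)|a|^2$ on the thin $U$; constancy of $\dim_{\C}\N_p$ along $\Sigma$ bounds the nonzero eigenvalues of $L_r$ on those directions below by a fixed $\lambda_0>0$, and constancy of $\dim_{\C}(\C T_pS\cap T^{1,0}_p\C^n)$ bounds the positive singular values of $a\mapsto(\langle\partial\rho_j,a\rangle)_{0\le j\le d}$ below by a fixed $\nu_0>0$. Completing squares — the dominant factor $(\varepsilon-r)^{-3/2}$ absorbing all cross terms and the negative normal contribution of $L_r$, and $A\delta$ being small — one obtains on $U$
$$
L_\phi(a,\bar a)\;\ge\;\Lambda_1\,|P_{\N_p^{\perp}}a|^2+\Lambda_2\,|P_{W_p^{\perp}}a|^2-\theta|a|^2,
$$
where $p\in\Sigma'$ is a boundary point near $z$, $W_p:=\C T_pS\cap T^{1,0}_p\C^n$, $P_E$ is the orthogonal projection of $\C^n$ onto $E$, $\theta$ is small, $\Lambda_1=\Lambda_1(\varepsilon)\to\infty$ as $\varepsilon\to 0$, and $\Lambda_2=cA\to\infty$ as $A\to\infty$.

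Since $\N_p\subseteq T^{1,0}_p\C^n$, we have $\N_p\cap W_p=\C T_pS\cap\N_p$, and because $\N_p^\perp,W_p^\perp$ vary continuously along $\Sigma$ (ranks constant) there is a fixed $c_0>0$ with $|P_{\N_p^{\perp}}a|^2+|P_{W_p^{\perp}}a|^2\ge c_0\,|P_{(\C T_pS\cap\N_p)^{\perp}}a|^2$; hence $L_\phi\ge\Lambda\,P_{(\C T_pS\cap\N_p)^{\perp}}-\theta$ with $\Lambda=c_0\min(\Lambda_1,\Lambda_2)$. For any complex $q$-plane $E\subseteq\C^n$ with orthonormal basis $e_1,\dots,e_q$, using $\operatorname{tr}(P_FP_E)\ge q-\dim_{\C}F^\perp$ together with the hypothesis $\dim_{\C}(\C T_pS\cap\N_p)\le q-1$,
$$
\sum_{i=1}^{q}L_\phi(e_i,\bar e_i)\;\ge\;\Lambda\operatorname{tr}\!\big(P_{(\C T_pS\cap\N_p)^{\perp}}P_E\big)-\theta q\;\ge\;\Lambda\big(q-\dim_{\C}(\C T_pS\cap\N_p)\big)-\theta q\;\ge\;\Lambda-\theta q.
$$
Choosing $\varepsilon$ small and $A$ large so that $\Lambda-\theta q\ge M$ then shows $\Sigma'$ satisfies $(P_q)$, completing the plan. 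I expect the main obstacle to be exactly the uniformity of $\lambda_0,\nu_0,c_0$: these degenerate near the loci where $\dim_{\C}\N_p$ or $\dim_{\C}(\C T_pS\cap T^{1,0}_p\C^n)$ jumps, which is precisely why one must first stratify $S$ and appeal to Theorem~\ref{thm:Countable union of Pq}. The secondary point — making $\phi$ well defined on a two-sided neighborhood of $b\Omega$ — is comparatively routine and is handled by the sublevel-set trick above.
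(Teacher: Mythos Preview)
The paper does not prove this proposition; it is quoted from the literature (Sibony for $q=1$, Straube's monograph for general $q$). Your approach---an explicit Catlin-type weight $\phi=-(\varepsilon-r)^{1/2}+A\sum_j\rho_j^{2}$ on a thin tube, preceded by a stratification and an appeal to Theorem~\ref{thm:Countable union of Pq}---is essentially the standard construction found in those references, and the overall architecture is sound.

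There is, however, a genuine (though easily repaired) gap at the step where you claim a uniform $c_0>0$ with
\[
|P_{\N_p^{\perp}}a|^2+|P_{W_p^{\perp}}a|^2\ \ge\ c_0\,|P_{(\N_p\cap W_p)^{\perp}}a|^2.
\]
Continuity of $P_{\N_p^{\perp}}$ and $P_{W_p^{\perp}}$ (guaranteed by constancy of $\dim_{\C}\N_p$ and $\dim_{\C}W_p$) is not sufficient: one also needs $P_{(\N_p\cap W_p)^{\perp}}$ to vary continuously, i.e.\ $\dim_{\C}(\N_p\cap W_p)$ to be constant, and your stratification does not enforce this. A concrete counterexample in $\C^2$: take $\N_p=\mathrm{span}(e_1)$ and $W_p=\mathrm{span}(e_1+te_2)$ for $t\in(0,1]$. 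Then $\dim\N_p=\dim W_p=1$ and $\N_p\cap W_p=\{0\}$ throughout, yet for $a=e_1$ the left side equals $t^2/(1+t^2)\to 0$ while the right side is $c_0$. For $q\ge 2$ this situation is compatible with the hypothesis $\dim_{\C}(\N_p\cap W_p)<q$ on all of $\Sigma'$, so the gap is real.

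Two easy fixes are available. Either add $p\mapsto\dim_{\C}(\C T_pS\cap\N_p)$ as a third stratifying integer (it is upper semicontinuous, being the fiber dimension of the closed distribution $\C TS\cap\N$), after which all three projections vary continuously and the compactness argument for $c_0$ goes through verbatim; or bypass the pointwise vector inequality and argue directly on $q$-planes: the function
\[
(p,E)\ \longmapsto\ \operatorname{tr}\!\big(P_{\N_p^{\perp}}P_E\big)+\operatorname{tr}\!\big(P_{W_p^{\perp}}P_E\big)
\]
is continuous on the compact set $\Sigma'\times\Gr_q(\C^n)$ and is strictly positive everywhere (vanishing would force $E\subseteq\N_p\cap W_p$, contradicting $\dim_{\C}(\N_p\cap W_p)<q$), hence bounded below by some $c>0$; this gives $\sum_i L_\phi(e_i,\bar e_i)\ge c\min(\Lambda_1,\Lambda_2)-\theta q$ directly. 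With either correction the remaining parameter choices ($\varepsilon$ small, then $A$ large, then $\delta$ small relative to both) are routine.
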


\begin{proof}[{\bf Proof of Theorem \ref{main theorem}}]
    With the previous notation, fix $\alpha\in A\cup\{-1\}$.
    For every $x \in S_{\alpha}\setminus S_{\alpha + 1}$, let $U_x$ and $F_x$ be the neighborhood and manifold given by Proposition \ref{pro_holdimlessq}. Notice that $U_x$ can be taken to be disjoint from $S_{\alpha+1}$. There exists a subset $\{x_j\}_{j \in J_{\alpha}} \subseteq S_{\alpha}\setminus S_{\alpha+1}$, where $J_{\alpha}$ is a countable index set, such that
    $$
    S_{\alpha}\setminus S_{\alpha+1}\subseteq \bigcup_{j\in\mathbb{N}} U_{x_j}\;.
    $$
    We can find countably many open sets $\{V_{j, k}\}_{k \in K_j}$ compactly contained in $U_{x_j}$, where $K_j$ is a countable index set, that cover $U_{x_j}$.  Therefore, $\{V_{j, k}\}_{k \in K_j,\, j \in J_{\alpha}}$ covers $S^\alpha\setminus S^{\alpha+1}$. 
    Consider the compact sets $$
    X_{j, k}^\alpha=(S_{\alpha}\setminus S_{\alpha+1})\cap \overline{V}_{j, k}=S_{\alpha}\cap \overline{V}_{j, k}.
    $$
    Each $X_{j, k}^\alpha$ is contained in the manifold $F_{x_j}$, which, by construction, is such that $$
    \dim \C T_yF_{x_j}\cap \N_y<q \hbox{ for all } y\in F_{x_j}.
    $$
    Hence, by Proposition \ref{pro:hol dim<q implies Pq}, each $X_{j, k}^{\alpha}$ satisfies Property ($P_q$). Therefore, \eqref{eq_decomp} allows us to write
    $$b\Omega=\left(\bigcup_{\alpha\in A\cup\{-1\}}\bigcup_{j \in J_{\alpha}}\bigcup_{k \in K_j} X^{\alpha}_{j, k} \right) \cup S_{\Coreof_q(\N)}\;.$$
    By Theorem \ref{thm:Countable union of Pq}, if $S_{\Coreof_q(\N)}$ satisfies Property ($P_q$), then $M$ does as well. The converse is trivially true, and the thesis follows.
\end{proof}

\section{Connection to Zaitsev's generalized stratifications}\label{Zaitsev section}

Abstracting the property of the decomposition \eqref{eq_decomp} asserted by Proposition \ref{pro_holdimlessq}, we obtain the following definition. Our choice of terminology comes from \cite[Definition 1.14]{Za24}. 

\begin{definition}\label{def:countable_regularity} Let $M$ be a smooth manifold and let $\mathcal{D}\subseteq \C TM$ be a closed complex distribution. We say that $M$ is \emph{countably $q$-regular with respect to $\mathcal{D}$} if it admits a partition 
\begin{equation}\label{the stratification}
M = \cup_{\alpha\in J} Z_\alpha
\end{equation} satisfying the following conditions: \begin{enumerate}
	\item $J$ is countable,	
	\item each $Z_\alpha$ is locally closed, 
	\item for each $\alpha\in J$ and each $p\in Z_\alpha$ there exists a submanifold $F\subseteq M$ containing an open neighborhood of $p$ in the topology of $Z_\alpha$ and such that \begin{equation}\label{eq:intersection<q}
		\dim_\C\left(\mathcal{D}_y\cap\C T_yF\right) <q\qquad \forall y\in F.
		\end{equation} 
\end{enumerate}
\end{definition}

If $\mathcal{D}$ is the Levi null distribution on a real hypersurface $M\subseteq \C^n$, then the definition above is essentially Definition 1.14 in Zaitsev's paper \cite{Za24}. The only difference is that there $F$ is required to be a CR submanifold, a property that is not needed for the result we are about to prove.

\begin{theorem}\label{thm:countably_regular}
Let $\mathcal{D}$ be as in Definition \ref{def:countable_regularity}.The manifold $M$ is countably $q$-regular with respect to $\mathcal{D}$ if and only if 
the Grassmannian $q$-core of $\mathcal{D}$ is empty. 
\end{theorem}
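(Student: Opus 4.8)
The plan is to prove the two implications separately, exploiting the fact that both sides are essentially reformulations of the same structural decomposition. For the direction ``Grassmannian $q$-core empty $\Rightarrow$ $M$ countably $q$-regular'', I would run the machinery already assembled in the paper. If $\Coreof_q(\mathcal{D}) = \mathcal{F}^\gamma = \emptyset$ with $\mathcal{F} = \Gr_q(\mathcal{D})$, then $S_{\Coreof_q(\mathcal{D})} = \emptyset$, and the decomposition \eqref{eq_decomp} reads $M = (S_{-1}\setminus S_0)\cup\bigcup_{\alpha\in A}(S_\alpha\setminus S_{\alpha+1})$. Set $J = A\cup\{-1\}$, which is countable since $\{\mathcal{F}^\alpha\}$ stabilizes at a countable ordinal, and $Z_\alpha = S_\alpha\setminus S_{\alpha+1}$; each $Z_\alpha$ is locally closed, being the difference of the closed set $S_\alpha$ and the closed set $S_{\alpha+1}$. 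Condition (3) of Definition \ref{def:countable_regularity} is then \emph{exactly} what Proposition \ref{pro_holdimlessq} (in its complex form, as noted in the Remark following it) delivers: for $p\in Z_\alpha$ one gets the neighborhood $U_p$ and manifold $F = F_p\subseteq U_p$ with $S_\alpha\cap U_p\subseteq F$ and $\dim_\C(\mathcal{D}_y\cap\C T_yF) < q$ for all $y\in U_p$, hence in particular for all $y\in F$; and $F\supseteq S_\alpha\cap U_p = Z_\alpha\cap U_p$ together with $U_p$ disjoint from $S_{\alpha+1}$ shows $F$ contains a $Z_\alpha$-open neighborhood of $p$. So this direction is essentially a bookkeeping translation.

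For the converse, ``$M$ countably $q$-regular $\Rightarrow$ Grassmannian $q$-core empty'', I would argue by contradiction: suppose $\Coreof_q(\mathcal{D}) = \mathcal{F}^\gamma \neq \emptyset$, so there is a point $p\in S_{\mathcal{F}^\gamma}$, and by the fixed-point property $\mathcal{F}^\gamma = (\mathcal{F}^\gamma)'$, i.e. $\mathcal{F}^\gamma = \Gr_q(TS_{\mathcal{F}^\gamma})\cap\mathcal{F}$ (Lemma \ref{lem_backtotheroots} applied at the stable ordinal). Concretely this means: there is a $q$-plane $V\subseteq\mathcal{D}_p$ with $V\subseteq\C T_pS$, where $S := S_{\mathcal{F}^\gamma}$. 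Now locate $p$ in the partition, say $p\in Z_{\alpha_0}$, and take the submanifold $F$ from condition (3): $F$ contains a $Z_{\alpha_0}$-open neighborhood of $p$ and $\dim_\C(\mathcal{D}_y\cap\C T_yF) < q$ for all $y\in F$. The key linear-algebra point to extract is that the ``smooth Zariski'' tangent distribution $TS$ at $p$ is contained in $\C T_pF$ — because $F$ contains a relatively open piece of $Z_{\alpha_0}$ around $p$, and one must relate $TS$ to $TZ_{\alpha_0}$. Here I expect the friction: $S = S_{\mathcal{F}^\gamma}$ need not be one of the strata $Z_\alpha$, and a priori $S$ could meet $Z_{\alpha_0}$ in a set smaller than a neighborhood of $p$ in $Z_{\alpha_0}$. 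So the real content is to show that $S$ itself is ``thick enough'' near $p$ to force $T_pS\subseteq T_pF$; one natural route is to first establish, by transfinite induction along the partition (ordering the strata and using that each lower stratum is swept out by the $F$'s of the regularity condition), that $S\subseteq$ the stratum of lowest index it meets, or more robustly that near $p$ the set $S$ is contained in the manifold $F$ — and then $\C T_pS\subseteq\C T_pF$ gives $V\subseteq\mathcal{D}_p\cap\C T_pF$, contradicting $\dim_\C(\mathcal{D}_p\cap\C T_pF) < q$ since $\dim_\C V = q$.

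The main obstacle, then, is the converse direction's comparison between the intrinsic object $S_{\Coreof_q(\mathcal{D})}$ and the externally given partition $\{Z_\alpha\}$ — making precise the claim that regularity of the partition forces the core's support to be ``tangentially trapped'' in a manifold $F$ violating \eqref{eq:intersection<q}. I would handle this by an induction on the index set showing that $S_{\Coreof_q(\mathcal{D})}$, if nonempty, must have a point at which its Zariski tangent space sits inside the tangent space of the associated $F$; once that tangential containment is in hand, the contradiction with the dimension bound is immediate. I would also double-check the edge behavior when $\mathcal{D}_p$ has dimension $<q$ (fiber of $\Gr_q(\mathcal{D})$ empty, so $p\notin S$ trivially) and when $\alpha_0 = -1$, both of which are already covered by Remark \ref{rem_dimlessk} and the $\alpha=-1$ case of Proposition \ref{pro_holdimlessq}.
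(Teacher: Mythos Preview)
Your forward direction is fine and matches the paper exactly: the decomposition \eqref{eq_decomp} with $S_{\Coreof_q(\mathcal{D})}=\emptyset$ and Proposition \ref{pro_holdimlessq} give the partition, and you have correctly checked conditions (1)--(3).

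The converse direction has a genuine gap. You correctly isolate the crux --- forcing $\C T_pS\subseteq\C T_pF$ at \emph{some} point $p\in S$ --- but the mechanism you propose (``transfinite induction along the partition'', ``ordering the strata'') does not work, because the countable index set $J$ in Definition \ref{def:countable_regularity} carries no order structure whatsoever, and the strata $Z_\alpha$ are not nested. There is nothing to induct on. Picking an arbitrary $p\in S$ and looking at the stratum $Z_{\alpha_0}$ containing it, as you do, gives no control: $S\cap Z_{\alpha_0}$ may well be nowhere dense in $S$ near $p$, in which case $\C T_pS$ has no reason to sit inside $\C T_pZ_{\alpha_0}$ or $\C T_pF$.

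The missing idea is a Baire category argument. The support $S$ is closed in $M$, hence locally compact Hausdorff, hence a Baire space. Writing $S=\bigcup_{\alpha\in J}\overline{Z_\alpha\cap S}$ as a countable union of closed sets, Baire produces some $\alpha$ for which $\overline{Z_\alpha\cap S}$ has nonempty interior $U$ in $S$. Using that $Z_\alpha$ is locally closed, you can shrink $U$ so that $U\subseteq Z_\alpha\cap S$. Now for $p\in U$ the set $S$ \emph{locally coincides} with a subset of $Z_\alpha$, which in turn is locally contained in $F$; this yields the chain $\C T_pS\subseteq\C T_p(Z_\alpha\cap S)\subseteq\C T_pZ_\alpha\subseteq\C T_pF$, and the contradiction with \eqref{eq:intersection<q} follows as you said. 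The point is that Baire \emph{selects} the good point $p$ for you; without it there is no way to find one.
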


\begin{proof} The ``if'' part has already been proved, see \eqref{eq_decomp} and Proposition \ref{pro_holdimlessq}. 

Let's prove the converse implication. Let $S:=S_{\mathfrak{C}_q(\mathcal{D})}$, which is a closed subset of $M$ with the property that for every $p\in S$ we have $\dim_\C\mathcal{D}_p\cap \C T_pS \geq q$. We argue by contradiction, assuming $M$ is countably $q$-regular with respect to $\mathcal{D}$ and $S$ is not empty. Let $W_\alpha=Z_\alpha\cap S$, where $Z_\alpha$ ($\alpha\in J$) is as in Definition \ref{def:countable_regularity}, and notice that $(\overline{W_\alpha})_{\alpha\in J}$ is a countable covering by closed sets of $S$. Since $S$ is locally compact and Hausdorff, it is a Baire space and therefore for at least one $\alpha$ the interior $U$ of $\overline{W_\alpha}$ in $S$ is not empty. Then $W_\alpha\cap U$ is dense in $U$. Since $Z_\alpha$ is locally closed in $M$, $W_\alpha$ is locally closed in $S$. Shrinking $U$, one may ensure that $W_\alpha\cap U$ is closed in $U$, and hence equal to $U$. Let $p\in U$. We have \begin{eqnarray*}\mathcal{D}_p\cap \C T_p S
&\subseteq&\mathcal{D}_p\cap \C T_p W_\alpha\\
&\subseteq&\mathcal{D}_p\cap \C T_p Z_\alpha\\
&\subseteq&\mathcal{D}_p\cap \C T_p F, 
\end{eqnarray*}
where $F$ is as in part 3 of the definition of countable $q$-regularity. This contradicts the fact that $\mathcal{D}_p\cap \C T_pS$ has dimension at least $q$. 
\end{proof}

The main thrust of Theorem \ref{thm:countably_regular} is that the existence of a ``stratification'' as in \eqref{the stratification} of Definition \ref{def:countable_regularity} is equivalent to the existence of a \emph{canonical} stratification, namely the one obtained iterating the Grassmannian derived set construction. Combining Theorem \ref{thm:countably_regular} and Theorem \ref{main theorem} and noting that the notion of countable $q$-regularity in \cite{Za24} implies countable $q$-regularity in the sense of Definition \ref{def:countable_regularity}, we recover Proposition 1.17 of \cite{Za24}.  This accomplishes proving the implication ``generalized stratifications $\Rightarrow$ Property ($P_q$)'' in that paper (cf. the diagram on page 5 of \cite{Za24}).

\begin{cor} (cf. \cite[Proposition 1.17]{Za24})
Let $\Omega\subseteq \C^n$ be a smooth bounded pseudoconvex domain with boundary $M$. Assume that $M$ is countably $q$-regular, in the sense of Definition \ref{def:countable_regularity} with respect to the Levi null distribution. Then $M$ satisfies Property ($P_q$) and hence the $\overline\partial$-Neumann operator $N_q$ is compact. 
\end{cor}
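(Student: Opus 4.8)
The plan is to obtain the corollary by chaining together results already established in the paper, so the proof will be short. First I would observe that by hypothesis $M$ is countably $q$-regular with respect to the Levi null distribution $\mathcal{N}$ in the sense of Definition \ref{def:countable_regularity}. By the ``only if'' direction of Theorem \ref{thm:countably_regular} (applied with $\mathcal{D}=\mathcal{N}$), this forces the Grassmannian $q$-core $\Coreof_q(\mathcal{N})$ to be empty; in particular its support $S_{\Coreof_q(\mathcal{N})}$ is empty. Since the empty set trivially satisfies Property $(P_q)$ (one may take $\phi\equiv 0$ on any neighborhood, or simply note the defining condition is vacuous), the support of the Levi $q$-core satisfies Property $(P_q)$.

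Next I would invoke Theorem \ref{main theorem}: the support of the Levi $q$-core satisfies Property $(P_q)$ if and only if $b\Omega$ does. Combining this with the previous paragraph yields that $M=b\Omega$ satisfies Property $(P_q)$. Finally, by the discussion in the introduction (\cite{FuSt01, St10}), Property $(P_q)$ on $b\Omega$ implies that the $\overline\partial$-Neumann operator $N_q:L^2_{(0,q)}(\Omega)\to L^2_{(0,q)}(\Omega)$ is compact, which gives the last assertion of the corollary.

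One small bookkeeping point to address is the compatibility of the two notions of countable $q$-regularity: the definition in \cite{Za24} requires the submanifold $F$ in condition (3) to be a CR submanifold, whereas Definition \ref{def:countable_regularity} does not. As the paper already notes, countable $q$-regularity in the sense of \cite{Za24} is strictly stronger, so a fortiori the hypothesis of the corollary (phrased via Definition \ref{def:countable_regularity}) covers Zaitsev's setting; no work is needed here beyond citing this observation. I do not anticipate a genuine obstacle in this corollary: it is purely a matter of assembling Theorem \ref{thm:countably_regular}, Theorem \ref{main theorem}, and the standard implication ``Property $(P_q)$ $\Rightarrow$ compactness of $N_q$'' in the correct order. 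If anything, the only point requiring a sentence of care is spelling out that the empty set satisfies Property $(P_q)$ so that Theorem \ref{main theorem} can be applied to conclude the same for $b\Omega$.
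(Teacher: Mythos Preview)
Your proposal is correct and follows exactly the approach the paper takes: combine Theorem \ref{thm:countably_regular} (countable $q$-regularity forces $\Coreof_q(\mathcal{N})=\emptyset$) with Theorem \ref{main theorem} (empty support trivially has Property $(P_q)$, hence so does $b\Omega$), and then cite the standard implication to compactness of $N_q$. The paper presents this as a one-line remark immediately before stating the corollary rather than as a separate proof, but the content is identical to yours.
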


\appendix

\section{Proof of Theorem \ref{thm:Countable union of Pq}}\label{Appendix section}

 We begin by defining the continuous $q$-subharmonic functions.

\begin{definition}
For $q \in \{1,\ldots, n\}$, the continuous $q$-subharmonic functions on an open set $U$, $P_q(U)$, is the set of continuous functions on $U$ that are subharmonic on every $q$-dimensional affine subspace $\Omega \subseteq U$.  For compact $X \subseteq \mathbb{C}^n$, $P_q(X)$ denotes the closure in the uniform topology on $X$ of the functions $f$ such that there is a neighborhood $U_f$ of $X$ where $f \in P_q(U_f)$.
\end{definition}

We omit the proofs of the following basic facts.
\begin{lem}\label{closed under maximums}
    If $U$ is an open set and $\phi_1, \phi_2 \in P_q(U)$, then $\max\{\phi_1, \phi_2\} \in P_q(U)$. 
\end{lem}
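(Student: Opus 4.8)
The plan is to prove Lemma~\ref{closed under maximums}: if $U$ is open and $\phi_1,\phi_2\in P_q(U)$, then $\max\{\phi_1,\phi_2\}\in P_q(U)$. The statement is local and reduces to a one-variable fact, so I would first unwind the definition. Set $\psi=\max\{\phi_1,\phi_2\}$. Since $\phi_1,\phi_2$ are continuous, $\psi$ is continuous, so the only thing to check is the subharmonicity requirement: for every $q$-dimensional affine subspace $A$ with $A\cap U\neq\emptyset$, the restriction $\psi|_{A\cap U}$ is subharmonic on $A\cap U$ (identifying $A$ with $\mathbb{C}^q$). By hypothesis $\phi_i|_{A\cap U}$ is subharmonic for $i=1,2$, so the whole problem collapses to the classical statement that the pointwise maximum of two subharmonic functions on an open subset of $\mathbb{C}^q$ (equivalently, $\mathbb{R}^{2q}$) is subharmonic.

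For that classical step I would give the standard sub-mean-value-property argument. Recall a continuous function $u$ on an open set $V\subseteq\mathbb{R}^m$ is subharmonic if and only if for every $x\in V$ and every sufficiently small $r>0$ with $\overline{B(x,r)}\subseteq V$ one has $u(x)\le \fint_{\partial B(x,r)} u\,d\sigma$, where $\fint$ denotes the average over the sphere. Given $x\in A\cap U$, for each $i$ we have $\phi_i(x)\le \fint_{\partial B(x,r)}\phi_i\,d\sigma\le \fint_{\partial B(x,r)}\psi\,d\sigma$, since $\phi_i\le\psi$ pointwise. Taking the maximum over $i\in\{1,2\}$ of the left-hand sides gives $\psi(x)\le \fint_{\partial B(x,r)}\psi\,d\sigma$, which is exactly the sub-mean-value inequality for $\psi$. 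Combined with continuity of $\psi$, this shows $\psi|_{A\cap U}$ is subharmonic. (Alternatively, one may invoke the even more elementary fact that the maximum of finitely many subharmonic functions is subharmonic, which is in any standard potential-theory reference; but writing the two lines above makes the note self-contained.)

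Having established the one-variable (more precisely, $\mathbb{C}^q$) fact, the lemma follows immediately: $\psi$ is continuous on $U$ and subharmonic on $A\cap U$ for every $q$-dimensional affine $A$, hence $\psi\in P_q(U)$ by definition. I do not expect any genuine obstacle here — the content is entirely the observation that $P_q$ is defined slice-by-slice and that subharmonicity in each slice is preserved by finite maxima, a routine consequence of the sub-mean-value characterization. The only point worth a sentence of care is that one should use the characterization of subharmonicity by the sub-mean-value property over spheres (valid for continuous functions), rather than trying to argue via the Laplacian, since $\psi$ need not be $C^2$ even when $\phi_1,\phi_2$ are smooth.
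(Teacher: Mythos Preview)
Your proof is correct and is exactly the standard argument one would expect. The paper itself omits the proof entirely, listing Lemma~\ref{closed under maximums} among the ``basic facts'' whose proofs are skipped, so your write-up is strictly more detailed than what the paper provides.
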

\begin{lem}\label{closed under maximums for X compact}
Let $X$ be compact and $\phi_1, \phi_2 \in P_q(X)$.  Then $\max\{\phi_1, \phi_2\} \in P_q(X)$.
\end{lem}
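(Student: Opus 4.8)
The plan is to reduce the compact case to the open-set case (Lemma \ref{closed under maximums}) by approximation, exploiting the fact that $(a,b)\mapsto\max\{a,b\}$ is non-expansive. First I would fix, for $i=1,2$, a sequence $(f_k^{(i)})_{k\in\mathbb{N}}$ converging uniformly on $X$ to $\phi_i$, where each $f_k^{(i)}$ lies in $P_q(U_k^{(i)})$ for some open neighborhood $U_k^{(i)}$ of $X$; such sequences exist by the very definition of $P_q(X)$. Setting $U_k:=U_k^{(1)}\cap U_k^{(2)}$, which is again an open neighborhood of $X$, both $f_k^{(1)}$ and $f_k^{(2)}$ restrict to elements of $P_q(U_k)$, so Lemma \ref{closed under maximums} gives that $g_k:=\max\{f_k^{(1)},f_k^{(2)}\}$ belongs to $P_q(U_k)$. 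Thus each $g_k$ is a continuous function on $\C^n$ (or on $U_k$) admitting a neighborhood of $X$ on which it is continuously $q$-subharmonic.

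It then remains to verify that $g_k\to\max\{\phi_1,\phi_2\}$ uniformly on $X$. This follows from the elementary pointwise inequality $|\max\{a,b\}-\max\{c,d\}|\le\max\{|a-c|,|b-d|\}$, which yields
$$\sup_X\big|g_k-\max\{\phi_1,\phi_2\}\big|\;\le\;\max\Big\{\sup_X|f_k^{(1)}-\phi_1|,\ \sup_X|f_k^{(2)}-\phi_2|\Big\}\xrightarrow[k\to\infty]{}0.$$
Hence $\max\{\phi_1,\phi_2\}$ is a uniform limit on $X$ of functions each of which is $q$-subharmonic on a neighborhood of $X$, i.e.\ $\max\{\phi_1,\phi_2\}\in P_q(X)$, which is the claim of Lemma \ref{closed under maximums for X compact}.

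I do not expect a genuine obstacle here. The only point requiring a moment of care is that the approximating neighborhoods $U_k^{(i)}$ need not be nested nor stable in $k$; but this is harmless, since for each fixed $k$ one only needs \emph{a} neighborhood of $X$ on which $g_k$ is $q$-subharmonic, and the convergence is measured throughout on the single fixed compact set $X$.
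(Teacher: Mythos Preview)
Your argument is correct and is exactly the standard one: approximate each $\phi_i$ by $q$-subharmonic functions on shrinking neighborhoods, intersect the neighborhoods, apply Lemma \ref{closed under maximums}, and use the $1$-Lipschitz property of $\max$ to pass to the limit. The paper itself omits the proof of this lemma, listing it among the ``basic facts'' whose proofs are left to the reader, so there is nothing to compare against; your write-up is precisely the intended verification.
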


\begin{lem}\label{Pq(X) is a closed space}
Suppose $X$ is compact and $\phi_j \in P_q(X)$ such that $\phi_j$ converges uniformly on $X$ to a function $f$.  Then $f \in P_q(X)$.
\end{lem}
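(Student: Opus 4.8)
The statement is Lemma \ref{Pq(X) is a closed space}: if $X$ is compact and $\phi_j \in P_q(X)$ converges uniformly on $X$ to $f$, then $f \in P_q(X)$. This is essentially the assertion that $P_q(X)$ is closed in the uniform topology, which is almost immediate from how $P_q(X)$ was defined --- namely, as the uniform closure on $X$ of the set of functions that are continuous $q$-subharmonic in some neighborhood of $X$. The plan is to reduce $f$ to a limit of such ``genuinely neighborhood-$q$-subharmonic'' functions via a diagonal argument.

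\textbf{Key steps.} First I would record that $f$ is continuous on $X$, being a uniform limit of continuous functions. Next, for each $j$, since $\phi_j \in P_q(X)$, by definition there is a sequence $\psi_{j,k}$, each $q$-subharmonic on some neighborhood $U_{j,k}$ of $X$, with $\psi_{j,k} \to \phi_j$ uniformly on $X$ as $k \to \infty$. Choose $k(j)$ so that $\sup_X |\psi_{j,k(j)} - \phi_j| < 1/j$. Then $\sup_X |\psi_{j,k(j)} - f| \le \sup_X|\psi_{j,k(j)} - \phi_j| + \sup_X|\phi_j - f| \to 0$. Since each $\psi_{j,k(j)}$ is $q$-subharmonic on a neighborhood of $X$, the sequence $\psi_{j,k(j)}$ exhibits $f$ as a uniform limit on $X$ of neighborhood-$q$-subharmonic functions, hence $f$ lies in the uniform closure, i.e.\ $f \in P_q(X)$.

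\textbf{Main obstacle.} There is essentially no obstacle here --- the lemma is a soft consequence of the definition, the only real content being that the uniform closure of a uniform closure is the same uniform closure (equivalently, a diagonal/triangle-inequality argument). One minor point to handle with care is that the neighborhoods $U_{j,k(j)}$ may shrink as $j \to \infty$; this is harmless, because membership in $P_q(X)$ only requires, for each approximant, \emph{some} neighborhood on which it is $q$-subharmonic, with no uniformity in the size of that neighborhood. So the proof is really just the observation that the closure operation is idempotent, spelled out by choosing a diagonal subsequence. (This is one of the ``basic facts'' the excerpt says are stated without proof precisely because they are routine; if desired, one could instead simply cite that $P_q(X)$ is by construction closed.)
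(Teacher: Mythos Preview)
Your argument is correct: $P_q(X)$ is defined as a uniform closure, and your diagonal/triangle-inequality step shows that a uniform limit of elements of this closure again lies in it, which is exactly the idempotence of the closure operation. The paper explicitly omits the proof of this lemma as a basic fact, so there is nothing to compare against; your write-up is the standard justification one would supply.
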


\begin{lem}[Gluing Lemma]\label{gluing lemma}
Let $U$ be an open set and let $\omega$ be a non-empty proper open subset of $U$.  If $u \in P_q(U)$, $v \in P_q(\omega)$, and $max\{u, v\} = u$ in a neighborhood in $U$ of each $y \in \partial \omega \cap U$, then the formula
$$
w = \begin{cases}
    \max\{u, v\} & \omega
    \\
    u & U\setminus\omega
\end{cases}
$$
defines a function in $P_q(U)$.
 \end{lem}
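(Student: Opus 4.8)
The plan is to prove the Gluing Lemma directly from the definition of $P_q$, reducing everything to the classical gluing statement for subharmonic functions on open subsets of $\C$ (applied on complex lines, i.e. $q=1$ slices), and then upgrading to $q$-dimensional slices.

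First I would observe that continuity of $w$ is the only delicate point at the topological level: on $\omega$ and on the interior of $U\setminus\overline\omega$ the function $w$ is manifestly continuous, and near a boundary point $y\in\partial\omega\cap U$ the hypothesis $\max\{u,v\}=u$ on a neighborhood $N_y\subseteq U$ of $y$ forces $w=u$ on $N_y$, so $w$ is continuous there as well. Since $\partial\omega\cap U$ together with $\omega$ and the interior of $U\setminus\omega$ covers $U$, continuity of $w$ on all of $U$ follows. I would also note $v\le u$ on a neighborhood of $\partial\omega\cap U$ inside $\omega$, which is what makes the two pieces match up.

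Next, to check $w\in P_q(U)$ I would fix an arbitrary complex affine subspace $\Pi$ of dimension $q$ with $\Pi\cap U\neq\emptyset$ and show $w|_{\Pi\cap U}$ is subharmonic there. Work on the open set $\Pi\cap U\subseteq\Pi\cong\C^q$. On this slice, $u|_{\Pi\cap U}$ is subharmonic (since $u\in P_q(U)$), and on the open subset $\omega':=\Pi\cap\omega$ of $\Pi\cap U$ the function $v|_{\omega'}$ is subharmonic (since $v\in P_q(\omega)$, restrict $v$ to $q$-planes inside $\omega$, noting $\Pi\cap\omega\subseteq\omega$). The gluing hypothesis restricts: for each $y\in\partial\omega'\cap(\Pi\cap U)\subseteq\partial\omega\cap U$ we have $\max\{u,v\}=u$ on a full neighborhood of $y$ in $U$, hence on a neighborhood of $y$ in $\Pi\cap U$. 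Thus all hypotheses of the classical gluing lemma for subharmonic functions on open subsets of $\C^q$ (equivalently, applied line-by-line; this is standard, e.g. via the sub-mean-value characterization and the fact that a function locally equal to a subharmonic function near the relevant boundary and equal to the max of two subharmonic functions in the interior is subharmonic) are met, giving that $w|_{\Pi\cap U}$ is subharmonic. Since $\Pi$ was arbitrary, $w\in P_q(U)$.

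The main obstacle, such as it is, is bookkeeping rather than conceptual: one must be careful that the local condition $\max\{u,v\}=u$ near $\partial\omega\cap U$ is exactly the hypothesis that makes the restricted $w$ satisfy the sub-mean value inequality across the interface $\partial\omega'$ within each $q$-plane, and that $\omega'=\Pi\cap\omega$ really is an open subset of $\Pi\cap U$ whose relative boundary inside $\Pi\cap U$ is contained in $\partial\omega\cap U$. Once the restriction to a generic $q$-plane is set up correctly, the statement is an immediate consequence of Lemma \ref{closed under maximums} applied locally together with the elementary fact that subharmonicity is a local property. I would therefore present the proof as: (i) verify continuity of $w$; (ii) reduce to subharmonicity on an arbitrary $q$-plane slice; (iii) invoke the classical one-variable gluing lemma on that slice; (iv) conclude.
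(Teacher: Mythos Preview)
The paper does not prove the Gluing Lemma at all: it is listed among the ``basic facts'' whose proofs are explicitly omitted. Your argument is a correct and standard way to supply the details---verify continuity of $w$ using the boundary hypothesis, then reduce to the classical gluing lemma for subharmonic functions by restricting to an arbitrary $q$-dimensional affine slice $\Pi\cap U$---and nothing more is needed.
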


As with plurisubharmonic and subharmonic functions, continuous $q$-subharmonic functions can be approximated by smooth ones.

 \begin{lem}\label{smooth approximation}
     Let $X$ be compact and $f \in P_q(X)$.  Then there exists a sequence of neighborhoods $U_j$ of $X$ and functions $f_j \in C^{\infty}(U_j)\cap P_q(U_j)$ which approach $f$ uniformly on $X$.
 \end{lem}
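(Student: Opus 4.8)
The plan is to reduce the statement to a single mollification estimate. By the definition of $P_q(X)$, the function $f$ is the uniform limit on $X$ of functions $g_k$, each belonging to $P_q(U_k)$ for some open neighborhood $U_k$ of $X$; passing to a subsequence we may assume $\|g_k-f\|_{C(X)}<2^{-k}$. It therefore suffices to prove the following claim: \emph{for every $g\in P_q(U)$ with $U$ an open neighborhood of the compact set $X$ and every $\varepsilon>0$, there is an open neighborhood $V\subseteq U$ of $X$ and a function $h\in C^\infty(V)\cap P_q(V)$ with $\|h-g\|_{C(X)}<\varepsilon$.} Granting the claim, apply it to each $g_k$ with $\varepsilon=2^{-k}$ to obtain neighborhoods $U_j=V_k$ and functions $f_j=h_k$; then $\|f_j-f\|_{C(X)}\le\|h_k-g_k\|_{C(X)}+\|g_k-f\|_{C(X)}<2^{1-k}\to 0$, which is the asserted conclusion.

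To prove the claim I would mollify. Fix $\chi\in C_c^\infty(\C^n)$ with $\chi\ge 0$, $\int\chi\,dV=1$, and $\operatorname{supp}\chi\subseteq B(0,1)$, set $\chi_\delta(z)=\delta^{-2n}\chi(z/\delta)$, and put $g_\delta:=g*\chi_\delta$. This is smooth on $U_\delta:=\{z\in U:\ B(z,\delta)\subseteq U\}$, and since $X$ is compact and contained in the open set $U$ the number $d:=\operatorname{dist}(X,\C^n\setminus U)$ is positive, so $U_\delta$ is an open neighborhood of $X$ for every $\delta<d$. Uniform convergence $g_\delta\to g$ on $X$ as $\delta\to 0^+$ follows from the uniform continuity of $g$ on the compact set $\{z:\ \operatorname{dist}(z,X)\le d/2\}\subseteq U$, since for $x\in X$ and $\delta<d/2$ one has $|g_\delta(x)-g(x)|\le\sup_{|w|\le\delta}|g(x-w)-g(x)|$. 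Thus it only remains to choose $\delta$ small, set $h=g_\delta$ and $V=U_\delta$, and verify that $g_\delta\in P_q(U_\delta)$.

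The heart of the matter — and the step I expect to demand the most care — is that convolution with $\chi_\delta$ preserves $q$-subharmonicity. Fix a $q$-dimensional complex affine subspace $\Lambda\subseteq\C^n$; I must show $g_\delta$ is subharmonic on $\Lambda\cap U_\delta$. Let $V_\Lambda$ be the complex linear subspace parallel to $\Lambda$, let $V_\Lambda^\perp$ be its orthogonal complement, and decompose $w=w_1+w_2$ with $w_1\in V_\Lambda$, $w_2\in V_\Lambda^\perp$, so that Lebesgue measure factors as $dV(w)=dV(w_1)\,dV(w_2)$. For $a\in\Lambda\cap U_\delta$,
$$
g_\delta(a)=\int_{V_\Lambda^\perp}\left(\int_{V_\Lambda}g(a-w_1-w_2)\,\chi_\delta(w_1+w_2)\,dV(w_1)\right)dV(w_2),
$$
where for $|w_1+w_2|\le\delta$ the point $a-w_1-w_2$ stays in $U$ because $a\in U_\delta$. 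For each fixed $w_2$ with $|w_2|\le\delta$, the translate $\Lambda-w_2$ is again a $q$-dimensional complex affine subspace, so $g|_{(\Lambda-w_2)\cap U}$ is subharmonic; hence $a\mapsto g(a-w_2)$ is subharmonic on $\{a\in\Lambda:\ a-w_2\in U\}$, and convolving it over the $2q$-real-dimensional space $V_\Lambda$ against the nonnegative, continuous, compactly supported kernel $w_1\mapsto\chi_\delta(w_1+w_2)$ yields a function of $a$ that is subharmonic on $\Lambda\cap U_\delta$, by the classical fact that the convolution of a subharmonic function with such a kernel is subharmonic. Finally, $g_\delta|_{\Lambda\cap U_\delta}$ is the integral of these subharmonic functions against the nonnegative measure $dV(w_2)$ supported in $\{|w_2|\le\delta\}$, hence is itself subharmonic; combined with the smoothness of $g_\delta$ this gives $g_\delta\in P_q(U_\delta)$ and completes the argument. (Only the nonnegativity and compact support of the mollifier are used here, not its radiality.)
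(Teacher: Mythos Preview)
Your argument is correct and follows essentially the same mollification approach as the paper. The only difference is that where the paper cites \cite[Proposition~1.2]{AhDi09} for the preservation of $q$-subharmonicity under convolution (and accordingly takes radial mollifiers), you supply a direct self-contained proof via the orthogonal decomposition of the convolution integral along $V_\Lambda\oplus V_\Lambda^\perp$, and correctly observe that radiality of $\chi$ is not needed.
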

 \begin{proof}
     By definition, there exists a sequence of neighborhoods $V_j$ of $X$ and functions $f_j \in P_q(V_j)$ such that
     $
     \|f_j - f\|_{L^{\infty}(X)} \leq 2^{-j}.
     $
     By shrinking each $V_j$, we may suppose that $f_j \in L^{\infty}(V_j)$.  Let $\{\chi_{\epsilon}\}$
 be an approximation to the identity where each $\chi_{\epsilon}$ is radial.  For each $j$, select $\epsilon_j$ such that 
 $
U_j = V_j \cap \{z: \hbox{dist}(z, \partial V_j) > \epsilon_j\}
 $
 still contains $X$ and
 $
 \|\chi_{\epsilon_j} * f_j - f_j\|_{L^{\infty}(X)} \leq 2^{-j},
 $
 \cite[8.14 Theorem]{Fo99}.  Then, $ \chi_{\epsilon_j} *f_j \in P_q(U_j) \cap C^{\infty}(U_j)$ \cite[Proposition 1.2]{AhDi09} and approaches $f$ uniformly on $X$.
 \end{proof}

Given a compact set $X$ and $f\in C(X)$, below we define
$$\tilde{f}(z)=\sup\{\phi(z):\ \phi\in P_q(X)\textrm{ such that }\phi\leq f\textrm{ on }X\}.$$
A probability measure $\mu$ is called a $q$-Jensen measure on $X$ centered at $z \in X$ if it is supported on $X$ and $f(z) \leq \int_X f\, d\mu$ for all $f \in P_q(X)$.   Let $J_{q,z}(X)$ be the set of all such measures. By Edwards' Theorem \cite[(4.40), p.89]{St10} \cite[1.2 Theorem]{Ga78},
\begin{equation}\label{Edwards Theorem}
\tilde{f}(z)=\inf\left\{\int_X f\,d\mu:\ \mu\in J_{q,z}(X)\right\}.
\end{equation}

The $q$-Jensen boundary of $J_q(X)$ is the set of points where $J_{q, z} = \{\delta_z\}$, the set containing only the Dirac delta measure at $z$.  $X$ satisfies Property ($P_q$) if and only if $X = J_q(X)$, \cite[Section 4.5]{St10}.

\begin{lem} \label{tilde in Pq}
Let $f \in C(X)$.   If
$\tilde{f}(z)$
is continuous on $X$, then $\tilde{f} \in P_q(X)$.
 \end{lem}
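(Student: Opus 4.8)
The plan is to show that $\tilde f$ is the uniform limit on $X$ of functions in $P_q(X)$, so that membership follows from Lemma \ref{Pq(X) is a closed space}. The natural family of approximants is the set $\{\phi : \phi \in P_q(X),\ \phi \le f\text{ on }X\}$ itself, whose pointwise supremum is $\tilde f$ by definition; by Lemma \ref{closed under maximums for X compact} this family is closed under taking finite maxima, i.e. it is upward-directed. So the first step is to invoke Dini's theorem: a directed family of continuous functions on the compact set $X$ whose pointwise supremum $\tilde f$ is continuous must in fact converge to $\tilde f$ uniformly on $X$. Concretely, fix $\varepsilon>0$; for each $z\in X$ pick $\phi_z$ in the family with $\phi_z(z) > \tilde f(z)-\varepsilon$, use continuity of $\phi_z$ and $\tilde f$ to get a neighborhood $U_z$ of $z$ on which $\phi_z > \tilde f-\varepsilon$, extract a finite subcover $U_{z_1},\dots,U_{z_m}$ of $X$, and set $\phi=\max\{\phi_{z_1},\dots,\phi_{z_m}\}$, which lies in the family by Lemma \ref{closed under maximums for X compact} and satisfies $\tilde f - \varepsilon < \phi \le f$...

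The one point requiring care is that we want $\|\phi - \tilde f\|_{L^\infty(X)} \le \varepsilon$, i.e. we also need $\phi \le \tilde f$ on $X$, not merely $\phi \le f$. But this is immediate: any $\psi$ in the competing family satisfies $\psi \le \tilde f$ pointwise on $X$ (that is exactly what it means for $\tilde f$ to be the supremum of the family), hence so does their maximum $\phi$. Thus $\tilde f - \varepsilon < \phi \le \tilde f$ on $X$, giving a sequence in $P_q(X)$ converging uniformly to $\tilde f$.

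The second step is then a one-line application of Lemma \ref{Pq(X) is a closed space}: since $\tilde f$ is the uniform limit on $X$ of a sequence drawn from $P_q(X)$, we conclude $\tilde f \in P_q(X)$. I do not anticipate a genuine obstacle here; the only thing to be vigilant about is making sure the approximating family is nonempty (any sufficiently negative constant is $q$-subharmonic and $\le f$ on the compact set $X$) and that the Dini-type argument is stated for a directed family rather than a monotone sequence, which is why Lemma \ref{closed under maximums for X compact} is the essential input. One could alternatively phrase the whole argument through Edwards' theorem \eqref{Edwards Theorem}, but the direct Dini argument is cleaner and avoids measure-theoretic detours.
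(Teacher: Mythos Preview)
Your proposal is correct and is essentially the same argument as the paper's: both take the family $\{\lambda\in P_q(X):\lambda\le f\}$, select near-optimal $\lambda_p$ pointwise, use continuity of $\lambda_p$ and $\tilde f$ to pass to neighborhoods, extract a finite cover, take the maximum (invoking Lemma \ref{closed under maximums for X compact}), and conclude via Lemma \ref{Pq(X) is a closed space}. Your framing as a Dini-type argument for a directed family and the explicit observation that $\phi\le\tilde f$ (not just $\phi\le f$) make the final uniform estimate slightly cleaner than the paper's ``an elementary argument shows'' step, but the substance is identical.
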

\begin{proof}
    It suffices to show that there is a sequence of neighborhoods of $U_j$ of $X$ and functions $f_j$ defined on $U_j$ such that $f_j \in P_q(U_j)$ and $f_j$ converges uniformly to $\tilde{f}$ on $X$.  Consider the nonempty set
    $
    S = \{\lambda: \lambda \in P_q(X), \lambda \leq f\}.
    $
    For each $p \in X$, let $\lambda_p$ be a function in $S$ such that
    $
    0 \leq \tilde{f}(p) - \lambda_p(p) \leq {\epsilon \over 3}.
    $
    Let $\delta_p$ and $\tau_p$ be such that for all $q \in X$ with $|q - p| < \delta_p$ and $|q - p| < \tau_p$, we have
    $
    |\lambda_p(p) - \lambda_p(q)| < {\epsilon \over 3} \hbox{ and } 
    |\tilde{f}(p) - \tilde{f}(q)| < {\epsilon \over 3}, \hbox{ respectively.}
    $
    By compactness, there is a cover $\{S_k\} = \{X \cap \mathbb{B}(p_k, \min(\tau_{p_k}, \delta_{p_k})\}_{k =1}^N$ of $X$ such that for any $q \in S_k$,
    $
    0 \leq \tilde{f}(q) - \lambda_{p_k}(q) \leq \epsilon.
    $ Let $f_{\epsilon} = \max(\lambda_{p_1},\ldots,\lambda_{p_N})$. By Lemma \ref{closed under maximums for X compact}, $f_{\epsilon} \in P_q(X)$.  An elementary argument shows that $\tilde{f}$ is the uniform limit of functions $f_{\epsilon} \in P_q(X)$. By Lemma \ref{Pq(X) is a closed space}, $\tilde{f} \in P_q(X)$.
\end{proof}

Below $\mathbb{B}(p, r)$ will denote a ball centered at $z$ of radius $r$.
\begin{lem}\label{A Pq function which is close on the Jensen boundary}
    Let $X$ be compact and
$
X(p, r) = \overline{\mathbb{B}(p, r)} \cap X \subset \hbox{int }J_q(X).
$
There exists neighborhoods $U_m$ of $X$ and functions $\phi_m \in P_q(U_m) \cap C^{\infty}(U_m)$ such that
$$
|\phi_m(z) + C|z|^2| < {1 \over m}, \quad z \in \mathbb{B}(p, r) \cap U_m.
$$
\end{lem}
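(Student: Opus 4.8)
The plan is to use Edwards' Theorem together with Lemma \ref{tilde in Pq} to manufacture the functions $\phi_m$ as suitable regularizations of $\widetilde{f}$ for a cleverly chosen $f$. Fix the constant $C>0$ once and for all. The natural candidate for the ``target'' function is $f(z)=-C|z|^2$ on $X$, but this function is already $q$-subharmonic (indeed plurisuperharmonic with the opposite sign, so $-C|z|^2 \in P_q$ is false — rather $C|z|^2$ is plurisubharmonic); so instead one modifies $f$ near the region of interest. Concretely, I would set $f = -C|z|^2$ on $X(p,r)$ and let $f$ be something large (say $\equiv \Lambda$ for a big constant $\Lambda$) outside a slightly larger ball, interpolating continuously in between, so that $f\in C(X)$. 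Then consider $\widetilde f$ as in \eqref{Edwards Theorem}.

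The key point is that on $X(p,r)\subseteq \mathrm{int}\, J_q(X)$ every point $z$ has $J_{q,z}(X)=\{\delta_z\}$, so Edwards' formula \eqref{Edwards Theorem} gives $\widetilde f(z)=\inf_{\mu\in J_{q,z}(X)}\int_X f\,d\mu = f(z) = -C|z|^2$ for $z$ in the (relative) interior of $X(p,r)$; near the boundary sphere $|z|=r$ one uses that the only Jensen measures are still Dirac masses on the open ball portion, so $\widetilde f = f = -C|z|^2$ throughout $\mathbb{B}(p,r)\cap X$. (One must check $\widetilde f$ is continuous: it is upper semicontinuous as an infimum of continuous integrals and lower semicontinuous because on a neighborhood it agrees with the continuous function $-C|z|^2$ on the relevant part and $f$ is continuous globally; alternatively one invokes that $f$ can be chosen so the whole construction keeps $\widetilde f$ continuous, e.g. by a standard argument that $\widetilde{f}$ is continuous whenever $X$ has a continuous $q$-subharmonic exhaustion-type bound — here $f$ is bounded so $\widetilde f \ge \min_X f$.) Granting continuity, Lemma \ref{tilde in Pq} yields $\widetilde f \in P_q(X)$, and then Lemma \ref{smooth approximation} produces neighborhoods $U_m$ of $X$ and functions $\phi_m\in P_q(U_m)\cap C^\infty(U_m)$ with $\|\phi_m-\widetilde f\|_{L^\infty(X)}\le 1/m$. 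Since $\widetilde f(z)=-C|z|^2$ on $\mathbb{B}(p,r)\cap X$, for $z\in\mathbb{B}(p,r)\cap U_m$ close to $X$ we get $|\phi_m(z)+C|z|^2|<1/m$ after shrinking $U_m$ (using continuity of $\phi_m$ and of $|z|^2$ to extend the estimate from $X$ to a neighborhood); this is exactly the desired inequality.

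The main obstacle I anticipate is the \emph{continuity of $\widetilde f$}: Lemma \ref{tilde in Pq} is only applicable once continuity is known, and in general $\widetilde f$ need not be continuous for an arbitrary $f\in C(X)$. The way around it is to choose $f$ with enough structure — taking $f$ to be (the restriction to $X$ of) a function that is itself continuous and for which $\widetilde f$ is forced to be continuous, for instance by arranging $f$ to be $q$-subharmonic outside $X(p,r)$ as well (so $\widetilde f = f$ there too) and matching up at the interface, or by using that $-C|z|^2$ extends to a continuous function on a neighborhood and invoking a known regularity result for the $q$-subharmonic envelope (analogous to the classical continuity of the Perron envelope for continuous boundary data). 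Once continuity is in hand, the rest is a routine chaining of Edwards' Theorem, Lemma \ref{tilde in Pq}, and Lemma \ref{smooth approximation}, plus an elementary neighborhood-shrinking to pass the sup-norm estimate from $X$ to $\mathbb{B}(p,r)\cap U_m$. A secondary technical point is verifying that Dirac masses are the only $q$-Jensen measures at points of $\mathrm{int}\, J_q(X)$, which is essentially the definition of the $q$-Jensen boundary recalled just before Lemma \ref{tilde in Pq}, together with the observation that $J_q(X)$ is (relatively) open there so a whole neighborhood of each such point lies in the Jensen boundary.
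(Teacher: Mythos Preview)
Your plan has a real gap exactly where you flag it: the continuity of $\widetilde f$ on all of $X$. Lemma~\ref{tilde in Pq} is only usable once you know $\widetilde f\in C(X)$, and none of your proposed workarounds actually secures this. Modifying $f$ to be large (or ``$q$-subharmonic'') off $X(p,r)$ does not help: on $X(p,r)$ you need $f=-C|z|^2$, which is \emph{plurisuperharmonic}, so you cannot make $f$ globally $q$-subharmonic while keeping it equal to $-C|z|^2$ on the ball; and if $f$ is not globally $q$-subharmonic there is no reason $\widetilde f$ should be continuous across the interface. The appeal to a ``known regularity result for the $q$-subharmonic envelope'' is not something available in this setting without further argument.

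The paper's proof avoids the issue entirely by \emph{not} invoking Lemma~\ref{tilde in Pq}. It takes $f(z)=-C|z|^2$ on all of $X$ (no modification needed), uses Edwards' theorem only to conclude $\widetilde f(z)=-C|z|^2$ for $z\in X(p,r)$, and then runs the finite-cover/max argument from the proof of Lemma~\ref{tilde in Pq} \emph{restricted to $X(p,r)$}: for each $z'\in X(p,r)$ pick $\lambda_{z'}\in P_q(X)$ with $\lambda_{z'}\le f$ and $\lambda_{z'}(z')$ within $\epsilon$ of $\widetilde f(z')$; cover $X(p,r)$ by finitely many balls on which the approximation persists; and set $f_m=\max_k\lambda_{(z')_k}\in P_q(X)$. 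This $f_m$ is within $3\epsilon$ of $-C|z|^2$ on $X(p,r)$ with no global continuity of $\widetilde f$ required. Then Lemma~\ref{smooth approximation} and a shrinking of $U_m$ finish as you describe. In short: rather than proving $\widetilde f\in P_q(X)$ and then approximating, approximate $\widetilde f$ on $X(p,r)$ directly from the supremum definition.
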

\begin{proof}
Let $f(z) = -C|z|^2$ and $\epsilon < {1 \over 4m}$. Since $X(p, r) \subseteq J_q(X)$, by Edwards' Theorem, \eqref{Edwards Theorem}, 
$
\tilde{f}(z) = -C|z|^2$ for all $z \in X(p, r)$.
Consider the nonempty set
$
S = \{\lambda: \lambda \in P_q(X), \lambda \leq -C|z|^2\}.
$
For each $z^{\prime} \in X(p, r)$, let $\lambda_{z^{\prime}}$ be a function in $S$ such that
$
0 \leq \tilde{f}(z^{\prime}) - \lambda_{z^{\prime}}(z^{\prime}) \leq \epsilon.
$
 Let $\delta_{z^{\prime}}$ and $\tau_{z^{\prime}}$ be such that for all $z^{\prime\prime} \in X(p, r)$ with $|z^{\prime\prime} - z^{\prime}| < \delta_{z^{\prime}}$ and $|z^{\prime\prime} - z^{\prime}| < \tau_{z^{\prime}}$, we have 
    $
    |\lambda_{z^{\prime}}(z^{\prime}) - \lambda_{z^{\prime}}(z^{\prime\prime})| \leq \epsilon \hbox{ and }
    |\tilde{f}(z^{\prime}) - \tilde{f}(z^{\prime\prime})| \leq \epsilon, \hbox{ respectively.}
    $
    Let $\{S_k\} = \{X(p, r) \cap \mathbb{B}((z^{\prime})_k, \min(\tau_{(z^{\prime})_k}, \delta_{(z^{\prime})_k})\}_{k =1}^N$ be a finite cover of $X(p, r)$.  For $z^{\prime\prime} \in S_k$,
    $
    0 \leq \tilde{f}(z^{\prime\prime}) - \lambda_{(z^{\prime})_k}(z^{\prime\prime})  \leq 3\epsilon.
    $
    Let $f_{m} = \max(\lambda_{(z^{\prime})_1},\ldots,\lambda_{(z^{\prime})_N})$. By Lemma \ref{closed under maximums for X compact}, $f_{m} \in P_q(X)$.
    Let $z \in X(p, r)$.  There is a $K \in \{1, \ldots, N\}$ such that $z \in S_K$.  Then
    $$
    0 \leq \tilde{f}(z) - f_{m}(z) = \tilde{f}(z) - \max(\lambda_{(z^{\prime})_1},\ldots,\lambda_{(z^{\prime})_N})(z)  \leq \tilde{f}(z) - \lambda_{(z^{\prime})_K}(z) \leq 3\epsilon.
    $$
    By Lemma \ref{smooth approximation}, there is a neighborhood $U_m$ of $X$ and a function $\phi_m \in C^{\infty}(U_m)\cap P_q(U_m)$ such that 
    $
    |\phi_m(z) - f_m(z)| < \epsilon.
    $ for $z \in X$.
    In particular, this implies that 
    $
    |\phi_m + C|z|^2| < 4\epsilon$ for $z \in X(p, r).$
    By shrinking $U_m$ and using the continuity of the two functions,
    $
    |\phi_m(z) + C|z|^2| < {1 \over m},$ for $z \in \overline{\mathbb{B}(p, r)} \cap U_m.
    $
\end{proof}

In the following lemma, we will need the notion of strictly $q$-subharmonic functions for certain $C^2$-functions.  A function $f \in P_q(U) \cap C^2(U)$ if and only if for every $(0, q)$-form $u$,
\begin{equation}\label{q-subharmonic}
\sum_{|K| = q - 1}{\hspace{-8pt}{\vphantom{\sum}}'}\hspace{3pt} \sum_{j, k = 1}^n {\partial^2 f(z) \over \partial z_j \partial \bar{z}_k}u_{jK}\overline{u_{kK}} \geq 0,
\end{equation}
\cite[p.84, p.88]{St10}, \cite[p. 600]{AhDi09}, and we will say that $f$ is $C^2$-strictly $q$-subharmonic, denoted by $f \in C^{2} \cap SP_q(U)$, if the inequality is strict. SPSH will denote the strictly plurisubharmonic functions.
\begin{lem}\cite[Proposition 1.6]{Si87}\label{Proposition 1.6 of Sibony}
Let $X$ be compact in $\mathbb{C}^n$ with Jensen boundary $J_q(X)$.  Let $z_0$ be a point not in the interior of $J_q(X)$.  Then for every $q$-Jensen measure $\mu$ centered at $z_0$, $\mu(\hbox{int }J_q(X)) = 0$.
\end{lem}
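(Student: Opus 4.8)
The plan is to argue by contradiction and extract from a $q$-Jensen measure $\mu$ centered at $z_0$, with $\mu(\text{int } J_q(X)) > 0$, a $q$-subharmonic function on a neighborhood of $X$ that violates the Jensen inequality at $z_0$. The mechanism is the one used throughout the appendix: build a $P_q$ function that is very negative (roughly $-C|z|^2$ up to a constant) on a ball $\overline{\mathbb{B}(p,r)}$ sitting inside $\text{int } J_q(X)$, and harmless elsewhere, then test $\mu$ against it.

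First I would choose a point $p$ and radius $r$ so that $X(p,r) = \overline{\mathbb{B}(p,r)} \cap X \subseteq \text{int } J_q(X)$ and $\mu(\mathbb{B}(p,r)) =: m_0 > 0$; such a ball exists because $\mu(\text{int } J_q(X)) > 0$ and the interior is covered by countably many such balls. Since $z_0 \notin \text{int } J_q(X)$, we may also arrange (shrinking $r$) that $z_0 \notin \overline{\mathbb{B}(p,r)}$, so $|z_0 - p| > r$. Next I would apply Lemma \ref{A Pq function which is close on the Jensen boundary} to $X(p,r)$: for each $m$ it produces a neighborhood $U_m$ of $X$ and $\phi_m \in P_q(U_m) \cap C^\infty(U_m)$ with $|\phi_m(z) + C|z|^2| < \tfrac{1}{m}$ on $\overline{\mathbb{B}(p,r)} \cap U_m$, where $C > 0$ is a constant I get to fix. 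The key quantitative point: on $X(p,r)$ the function $\phi_m$ is bounded above by $-C|p|^2 - 2C\,\text{Re}\langle z - p, \bar p\rangle - C|z-p|^2 + \tfrac1m$, hence (choosing the center of coordinates at $p$, i.e.\ replacing $|z|^2$ by $|z-p|^2$ in the statement of Lemma \ref{A Pq function which is close on the Jensen boundary}, which is legitimate) $\phi_m \leq \tfrac1m$ on all of $X$ while $\phi_m \leq -Cr^2 + \tfrac1m$ is \emph{not} what we want — rather, we want $\phi_m$ to be uniformly bounded above on $X$ and strictly negative (bounded away from $0$) on the sub-ball $\overline{\mathbb{B}(p, r/2)}$, say $\phi_m \leq -Cr^2/4 + \tfrac1m$ there.

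Then I would integrate against $\mu \in J_{q,z_0}(X)$. Since $\phi_m \in P_q(X)$ (it lies in $P_q(U_m)$ near $X$, so in $P_q(X)$), the Jensen inequality gives
$$\phi_m(z_0) \leq \int_X \phi_m \, d\mu = \int_{\mathbb{B}(p,r/2)} \phi_m\, d\mu + \int_{X \setminus \mathbb{B}(p,r/2)} \phi_m\, d\mu \leq \left(-\frac{Cr^2}{4} + \frac1m\right)\mu(\mathbb{B}(p,r/2)) + \frac1m \cdot 1.$$
Meanwhile, because $z_0 \notin \overline{\mathbb{B}(p,r)}$ and $\phi_m(z) \approx -C|z - p|^2$ only near $p$, I need a lower bound on $\phi_m(z_0)$: here one uses that $\phi_m$ can be taken normalized so that, away from the ball, it does not plunge to $-\infty$ — concretely, adding the globally defined plurisubharmonic (hence $P_q$) function $C|z - p|^2$ to $\phi_m$ and subtracting a constant, one arranges $\phi_m \geq -C\,\text{diam}(X)^2 - 1$ everywhere, or more simply one replaces $\phi_m$ by $\max\{\phi_m, \psi\}$ for a suitable smooth plurisubharmonic $\psi$ via Lemma \ref{gluing lemma}, keeping membership in $P_q$. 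With a uniform lower bound $\phi_m(z_0) \geq -A$ independent of $m$, letting $m \to \infty$ in the displayed inequality yields $-A \leq -\tfrac{C r^2}{4}\mu(\mathbb{B}(p,r/2))$, and since $\mu(\mathbb{B}(p,r/2))$ can be taken positive (shrink the original ball if necessary so that $\mu$ charges the concentric half-ball — or just relabel $r$), choosing $C$ large enough forces a contradiction.

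The main obstacle I anticipate is the bookkeeping around the ball and the normalization of $\phi_m$: Lemma \ref{A Pq function which is close on the Jensen boundary} as stated controls $\phi_m$ on $\mathbb{B}(p,r)$ but says nothing about its behavior on the rest of $X$, where a priori $\phi_m$ could be extremely negative at $z_0$, destroying the argument. Handling this requires either gluing (Lemma \ref{gluing lemma}) a fixed plurisubharmonic barrier below $\phi_m$ outside a slightly larger ball — checking that the hypothesis $\max\{u,v\} = u$ near the boundary of the gluing region holds, which forces $C$ and the barrier to be chosen compatibly — or, alternatively, re-running the proof of Lemma \ref{A Pq function which is close on the Jensen boundary} with $f(z) = \max\{-C|z-p|^2, -Cr^2\}$ (a continuous $P_q$, in fact plurisubharmonic, function that is already bounded below) in place of $-C|z|^2$, which automatically yields $\phi_m$ bounded below by $-Cr^2 - 1$ everywhere while still $\approx -C|z-p|^2$ on the small ball. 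I would pursue the latter route as it avoids any delicate gluing and keeps all estimates uniform in $m$.
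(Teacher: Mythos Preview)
Your approach has a sign error that cannot be repaired within the strategy you outline. You want $\phi_m$ to be ``strictly negative (bounded away from $0$)'' on the sub-ball $\overline{\mathbb{B}(p,r/2)}$, claiming $\phi_m \leq -Cr^2/4 + 1/m$ there. But $\phi_m \approx -C|z-p|^2$ on $\mathbb{B}(p,r)$, and on the sub-ball $|z-p| \leq r/2$, so $-C|z-p|^2 \geq -Cr^2/4$: the function is closest to $0$ at the center $p$ and decreases outward. Your displayed estimate for $\int_X \phi_m\,d\mu$ therefore fails, and no contradiction emerges. More fundamentally, to violate the Jensen inequality directly you would need a $\phi \in P_q(X)$ that is more negative on the ball (where $\mu$ has mass) than at $z_0$, and $-C|z-p|^2$ has exactly the opposite profile. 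Your proposed fix of rerunning Lemma~\ref{A Pq function which is close on the Jensen boundary} with $f(z) = \max\{-C|z-p|^2, -Cr^2\}$ only makes this clearer: this $f$ is already globally plurisubharmonic, so $\tilde f = f$ and the resulting $\phi_m \approx f$; but then $f(z_0) = -Cr^2$ while $f \geq -Cr^2$ everywhere, so the Jensen inequality $f(z_0) \leq \int f\,d\mu$ is trivially satisfied. No choice of $C$ produces a contradiction, because your lower bound $A$ on $-\phi_m(z_0)$ scales with $C$ as well.

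The paper's argument is structurally different and does not try to violate the Jensen inequality with a single function. From $\phi_m$ and a cutoff $\chi$ it builds $\psi_{m,k} \in P_q(U_m)$ that vanishes outside $\mathbb{B}(p,r)$ (hence at $z_0$), is bounded above by $1/m + 1/k$, and is $C^\infty$ and \emph{strictly} $q$-subharmonic on $\mathbb{B}(p,r/2)$. The strictness lets one add $\delta\theta$ for an arbitrary $\theta \in C^2$ supported in $\mathbb{B}(p,r/2)$ and small $\delta>0$ while remaining in $P_q$. Applying the Jensen inequality to $\psi_{m,k} + \delta\theta$ and sending $m,k \to \infty$ gives $0 \leq \int \theta\,d\mu$ for every such $\theta$, forcing $\mu(\mathbb{B}(p,r/2)\cap X) = 0$. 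The role of Lemma~\ref{A Pq function which is close on the Jensen boundary} here is not to make $\phi_m$ very negative somewhere, but to make $\phi_m + C|z|^2$ close to $0$ on the ball so that the cutoff glues to the zero function across $\partial\mathbb{B}(p,r)$.
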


\begin{proof}
Suppose $\hbox{int } J_q(X) \neq \emptyset$. Let $p \in X$ and $r > 0$ be such that
$
X(p, r) = \overline{\mathbb{B}(p, r)} \cap X \subseteq \hbox{int } J_q(X).
$
It suffices to show that $\mu(\mathbb{B}(p, {r \over 2}) \cap X) = 0$.  Let $\chi \in C_c^{\infty}(\mathbb{C}^n)$ be nonnegative, identically equal to 1 in a neighborhood of $\partial \mathbb{B}(p, r)$ and 0 in $\mathbb{B}(p, {r \over 2})$.  Let $C$ be a constant such that $C|z|^2 - \chi \in C^{\infty} \cap SPSH(\mathbb{C}^n)$.  By Lemma \ref{A Pq function which is close on the Jensen boundary}, there exists neighborhoods $U_m$ of $X$ and functions $\phi_m \in C^{\infty}(U_m) \cap P_q(U_m)$ such that
\begin{equation}\label{relevant inequalities for phim}
    |\phi_m + C|z|^2| < {1 \over m}, \quad z \in \mathbb{B}(p, r) \cap U_m.
\end{equation}
Since $C|z|^2 - \chi \in SPSH(\mathbb{C}^n) \cap C^\infty(\mathbb{C}^n)$ and $\phi_m \in P_q(U_m)$ 
\begin{equation}\label{phim is strictly q-subharmonic in this location}
    \phi_m + C|z|^2 - \chi + {1 \over k} \in SP_q(\mathbb{B}(p, r) \cap U_m).
\end{equation}
For $m, k > 1$, let 
\begin{equation}\label{definition of psimk}
\psi_{m, k} = \begin{cases}
    \max(0, \phi_m + C|z|^2 - \chi + {1 \over k}) & \mathbb{B}(p, r) \cap U_m
    \\
    0 & U_m \setminus (\mathbb{B}(p, r) \cap U_m).
    \end{cases}
\end{equation}
By \eqref{relevant inequalities for phim}, in $U_m$ and near $\partial \mathbb{B}(p, r)$,
$$
\phi_m + C|z|^2 - \chi + {1 \over k} < {1 \over m} + {1 \over k} - 1 \leq 0, \quad m, k > 1.
$$
Thus, $\psi_{m, k} \in C(U_m)$.  By Lemma \ref{gluing lemma}, $\psi_{m, k} \in P_q(U_m)$.  Notice that on $\mathbb{B}(p, {r \over 2}) \cap U_m$, using \eqref{relevant inequalities for phim},
\begin{equation}\label{the long expression is positive over here}
    \phi_m + C|z|^2 - \chi + {1 \over k} = \phi_m + C|z|^2 + {1 \over k} > -{1 \over m} + {1 \over k} > 0, \quad \hbox{for $k < m$.}
\end{equation}
By \eqref{phim is strictly q-subharmonic in this location}-\eqref{the long expression is positive over here},
\begin{equation}\label{psimk is strictly q-subharmonic in this region which will allow the integral estimate to work}
    \psi_{m, k} \in C^{\infty} \cap SP_q(U_m \cap \mathbb{B}(p, {r \over 2})) \hbox{ for } 1 < k < m.
\end{equation}
Let $\theta \in C^2(\mathbb{C}^n)$ with support on $\mathbb{B}(p, {r \over 2})$.  By \eqref{psimk is strictly q-subharmonic in this region which will allow the integral estimate to work} and since $\psi_{m, k} \in P_q(U_m)$, we have for $\delta > 0$ sufficiently small $\psi_{m, k} + \delta\theta \in P_q(U_m)$; hence $\psi_{m, k} + \delta\theta|_X \in P_q(X)$.  Since $z_0 \not\in \mathbb{B}(p, r)$, $\psi_{m, k}(z_0) = 0$.  Since $\mu$ is a Jensen measure at $z_0$, for $1 < k < m$,
\begin{equation}\label{0 is less than this integral}
0 = \psi_{m, k}(z_0) + \delta\theta(z_0) \leq \int_{X} \psi_{m, k} + \delta\theta d\mu
= \int_{X \cap \mathbb{B}(p, r)} \psi_{m, k} + \delta\theta d\mu. 
\end{equation}
By \eqref{relevant inequalities for phim} and \eqref{definition of psimk},
\begin{equation*}
    \psi_{m, k} < \max(0, {1 \over m} -\chi + {1 \over k}) < {1 \over m} + {1 \over k}, \quad X \cap \mathbb{B}(p, r).
\end{equation*}
Letting $m \to \infty$ and then $k \to \infty$ in \eqref{0 is less than this integral} yields
$$
0 \leq \int_{X \cap \mathbb{B}(p, r)}\theta d\mu = \int_{X \cap \mathbb{B}(p, {r \over 2})}\theta d\mu.
$$
Since $\theta$ is an arbitrary $C^2$ function supported on $\mathbb{B}(p, {r \over 2})$, $\mu(X \cap \mathbb{B}(p, {r \over 2})) = 0$.
\end{proof}

In the next two lemmas, $d(z, F)$ will denote the distance from $z$ to $F$.

\begin{lem}\label{distance function is Pq}
Let $X$ be compact, $F = X \setminus \hbox{int }J_q(X)$ be nonempty and $h(z) = d(z, F)$.  Then $h \in P_q(X)$  Moreover, for any $\epsilon > 0$, there is a neighborhood $W$ of $X$ and a function $\psi \in P_q(W)$ such that 
$
|\psi(z) - h(z)| < \epsilon,
$
for all $z \in W$.

\end{lem}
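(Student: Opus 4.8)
The plan is to establish $h\in P_q(X)$ by means of Edwards' Theorem \eqref{Edwards Theorem}, and then to derive the approximation on a whole neighborhood directly from the definition of $P_q(X)$. First observe that, $\hbox{int }J_q(X)$ being relatively open in the compact set $X$, its complement $F=X\setminus\hbox{int }J_q(X)$ is compact; it is nonempty by hypothesis, so $h(z)=d(z,F)$ is a well-defined, nonnegative, $1$-Lipschitz---in particular continuous---function on $\C^n$ which vanishes identically on $F$. Recall the envelope $\tilde h$ associated to $h$ via \eqref{Edwards Theorem}. Since $\delta_z\in J_{q,z}(X)$ for every $z\in X$, one always has $\tilde h\le h$ on $X$; hence it suffices to prove the reverse inequality $\tilde h\ge h$ on $X$, for then $\tilde h=h$ is continuous and Lemma \ref{tilde in Pq} gives $\tilde h=h\in P_q(X)$.

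The core step is thus to show that $\int_X h\,d\mu\ge h(z)$---in fact $=h(z)$---for every $z\in X$ and every $q$-Jensen measure $\mu\in J_{q,z}(X)$. I would distinguish two cases. If $z\in\hbox{int }J_q(X)\subseteq J_q(X)$, then by the definition of the $q$-Jensen boundary the only $q$-Jensen measure centered at $z$ is $\delta_z$, so $\int_X h\,d\mu=h(z)$ trivially. If instead $z\in F$, i.e.\ $z\notin\hbox{int }J_q(X)$, then Lemma \ref{Proposition 1.6 of Sibony} applies and yields $\mu(\hbox{int }J_q(X))=0$; hence $\mu$ is carried by $F$, and since $h$ vanishes on $F$ and $h(z)=0$ we again get $\int_X h\,d\mu=0=h(z)$. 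Taking the infimum over $\mu\in J_{q,z}(X)$ in \eqref{Edwards Theorem} now gives $\tilde h(z)=h(z)$, and therefore $h\in P_q(X)$.

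For the ``moreover'' statement, fix $\epsilon>0$. By the definition of $P_q(X)$ (one could also invoke Lemma \ref{smooth approximation}, though smoothness is not needed here) there are a neighborhood $V$ of $X$ and a function $g\in P_q(V)$ with $\|g-h\|_{L^\infty(X)}<\epsilon/2$. Since $g$ and $h$ are continuous, $W:=V\cap\{z\in\C^n:\ |g(z)-h(z)|<\epsilon\}$ is an open neighborhood of $X$, and $\psi:=g|_W$ lies in $P_q(W)$ (being the restriction of a $q$-subharmonic function to an open subset) and satisfies $|\psi(z)-h(z)|<\epsilon$ for all $z\in W$.

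The only genuinely non-formal point is the second case of the middle paragraph, where everything rests on Lemma \ref{Proposition 1.6 of Sibony}---the statement that a $q$-Jensen measure based at a point outside $\hbox{int }J_q(X)$ puts no mass on $\hbox{int }J_q(X)$; the remainder is the formal calculus of the envelope $\tilde h$ together with the definition of $P_q$ on compact sets.
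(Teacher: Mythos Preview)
Your proof is correct and follows the paper's strategy: show $\tilde h=h$ via Edwards' Theorem, invoke Lemma~\ref{tilde in Pq}, and then extend the approximation to a neighborhood by continuity. The one difference is in the case $z\in F$. You appeal to Lemma~\ref{Proposition 1.6 of Sibony} to conclude that any $\mu\in J_{q,z}(X)$ is supported on $F$, whence $\int_X h\,d\mu=0=h(z)$. The paper instead notes simply that constants lie in $P_q(X)$, so $0$ is a competitor in the envelope and $\tilde h\ge 0$; combined with $\tilde h\le h$ and $h(z)=0$ this gives $\tilde h(z)=0$ directly. Thus what you flag as ``the only genuinely non-formal point'' is in fact the easier case and does not require Lemma~\ref{Proposition 1.6 of Sibony} at all; the nontrivial input (that Jensen measures at interior points of $J_q(X)$ reduce to $\delta_z$) is already built into the definition of the Jensen boundary.
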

\begin{proof}
   Since $0 \leq h(z)$ and $h|_F \equiv 0$, $\tilde{h}(z) = 0$, for $z \in F$.  On the other hand by \eqref{Edwards Theorem},
    $
        \tilde{h}(z) = h(z)$ for $z \in X \setminus F$.  Thus, $\tilde{h} \equiv h$ on $X$.  By Lemma \ref{tilde in Pq}, $h \in P_q(X)$. By definition there is a neighborhood $W$ of $X$ and a function $\psi \in P_q(W)$ such that $\psi$ and $h$ are $\epsilon/2$ close on $X$.  Since both functions are continuous on $W$, after possibly shrinking $W$, they are $\epsilon$ close on $W$.
\end{proof}

\begin{lem}\label{break up of the longer lemma into a second part}
    Let $F = X \setminus \hbox{int }J_q(X)$.  If $U$ is a neighborhood of $F$ and $\phi \in P_q(U)$ with $\phi > 0$, then there is a function $\theta$ defined in a neighborhood $W$ of $X$ such that $\theta \in P_q(W)$ and $\theta = \phi$ on a neighborhood of $F$.
\end{lem}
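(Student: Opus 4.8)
The plan is to obtain $\theta$ by gluing the given function $\phi$, which lives only on a neighborhood of $F$, to a globally defined $q$-subharmonic function that is much smaller than $\phi$ near $F$ but strictly larger than $\phi$ away from $F$. The gluing will be performed with Lemma~\ref{gluing lemma}, and the auxiliary global function will be manufactured from the $P_q$-approximation of the distance function $d(\cdot,F)$ provided by Lemma~\ref{distance function is Pq}.

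First I would record that $F$ is compact, being closed in the compact set $X$, and that $\phi$ is continuous (since $\phi\in P_q(U)$) and positive on the open set $U\supseteq F$. I would then fix $\delta_0>0$ so small that $K:=\{z:\ d(z,F)\le 3\delta_0\}$ is a compact subset of $U$, and set $c:=\min_K\phi>0$ and $M:=\max_K\phi$. If $X\subseteq U$ the assertion is immediate (take $W=U$ and $\theta=\phi$), so I may assume $X\not\subseteq U$; since $\{d(\cdot,F)<3\delta_0\}\subseteq U$, this yields a point $x_0\in X$ with $d(x_0,F)\ge 3\delta_0$, which will serve only to make the gluing domain a proper subset.

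Next I would apply Lemma~\ref{distance function is Pq} with $\epsilon>0$ chosen small relative to the already-fixed quantities $\delta_0,c,M$ (concretely, $\epsilon<\min\{\delta_0/4,\ c\delta_0/(8M)\}$), obtaining a neighborhood $W$ of $X$ and $\psi\in P_q(W)$ with $|\psi-d(\cdot,F)|<\epsilon$ throughout $W$. Putting $\lambda:=2M/\delta_0$, the function $u:=\lambda(\psi+\epsilon)$ belongs to $P_q(W)$, being affine in $\psi$ with positive leading coefficient, and it satisfies the two estimates I need: (i) on $\{d(\cdot,F)>\delta_0/2\}\cap W$ one has $\psi+\epsilon>d(\cdot,F)>\delta_0/2$, so $u>\lambda\delta_0/2=M$; and (ii) choosing $\eta\in(0,\delta_0)$ small enough that $\lambda(\eta+2\epsilon)<c$ (possible because $2\lambda\epsilon<c/2$), one has $u<c$ on $\{d(\cdot,F)<\eta\}\cap W$.

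Finally I would set $\omega:=\{z\in W:\ d(z,F)<\delta_0\}$, which is open and non-empty (it contains $F$), proper in $W$ (as $x_0\in W\setminus\omega$), and contained in $\{d(\cdot,F)<3\delta_0\}\subseteq U$, so that $\phi$ restricts to an element of $P_q(\omega)$. Each point of $\partial\omega\cap W$ lies in $\{d(\cdot,F)=\delta_0\}$, hence has the open neighborhood $\{d(\cdot,F)>\delta_0/2\}\cap W$ on which, combining (i) with $\phi\le M$ on $K$, we get $u>M\ge\phi$, i.e.\ $\max\{u,\phi\}=u$. Lemma~\ref{gluing lemma} then produces $\theta\in P_q(W)$ equal to $\max\{u,\phi\}$ on $\omega$ and to $u$ on $W\setminus\omega$; and on the neighborhood $\{d(\cdot,F)<\eta\}\subseteq\omega$ of $F$, estimate (ii) gives $u<c\le\phi$, so $\theta=\max\{u,\phi\}=\phi$ there, as required. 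The only point that calls for care — and it is hardly a genuine obstacle — is the order in which the constants are chosen ($\delta_0$ from $F$ and $U$; then $c,M$ from $\phi$ on $K$; only then $\epsilon$ taken small, which Lemma~\ref{distance function is Pq} permits), together with the observation that the buffer $\delta_0/2<\delta_0$ is precisely what makes estimates (i) and (ii) compatible with the hypothesis of the Gluing Lemma.
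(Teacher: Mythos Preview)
Your proof is correct and follows essentially the same route as the paper: approximate $d(\cdot,F)$ by a $P_q$-function via Lemma~\ref{distance function is Pq}, rescale/shift it so that it is dominated by $\phi$ near $F$ and dominates $\phi$ near the boundary of a small collar, and then invoke the Gluing Lemma. The only cosmetic differences are that the paper subtracts a constant to make the auxiliary function negative near $F$ (relying on $\phi>0$) and then rescales, whereas you add $\epsilon$ and scale by $\lambda=2M/\delta_0$, comparing against $c=\min_K\phi$; and the paper glues over $\{d<3.5\delta\}$ while you glue over $\{d<\delta_0\}$.
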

\begin{proof}
After shrinking $U$ we may suppose that $\phi$ is bounded on $U$.  Let $\delta$ be such that 
\begin{equation}\label{how big U is}
\{d(z, F) < 4\delta\} \ssubset U.
\end{equation}
Using Lemma \ref{distance function is Pq}, let $W$ be a neighborhood of $X$ and $\psi_1$ be a function in $P_q(W)$ such that
$|\psi_1(z) - d(z, F)| < \delta$ for all $z \in W$. The set $W \cap \{d(z, F) < \delta\}$ is a nonempty neighborhood of $F$, and $W \cap \{d(z, F) \geq 3.5\delta\}$ is a possibly empty set.
Let $\psi_2:W \to \mathbb{R}$ by $\psi_2 = \psi_1 - 2\delta$.  Then
\begin{eqnarray*}
    \psi_2 = \psi_1 - 2\delta &<& d(z, F) + \delta - 2\delta < 0, \quad z \in W \cap \{d(z, F) < \delta\}.
\end{eqnarray*}
Additionally,
\begin{eqnarray*}
\psi_2(z) = \psi_1(z) - 2\delta &\geq& d(z, F) - \delta - 2\delta \geq .5\delta, \quad W \cap \{d(z, F) \geq 3.5\delta\}.
\end{eqnarray*}
Rescale $\psi_2$ such that
\begin{equation}\label{where psi2 is negative and positive}
\psi_2(z) < 0, \quad z \in \{d(z, F) < \delta\} \cap W, \quad \psi_2(z) > \max_{U}\phi, \quad W \cap \{d(z, F) \geq 3.5\delta\}.
\end{equation}
By the continuity of $\psi_2$, $\psi_2(z) > \max_U \phi$ holds in a neighborhood in $W$ of $W \cap \{d(z, F) \geq 3.5\delta\}$. Define $\theta$ by
\begin{equation}
\theta = \begin{cases} 
\max(\psi_2, \phi), & W\cap \{d(z, F) < 3.5\delta\}
\\
\psi_2 & W \cap \{d(z, F) \geq 3.5\delta\}.
\end{cases}
\end{equation}
By \eqref{how big U is}, $\phi$ is well-defined on the nonempty neighborhood $W\cap \{d(z, F) < 3.5\delta\}$.  If $W \cap \{d(z, F) \geq 3.5\delta\}$ is empty, then using Lemma \ref{closed under maximums}, $\theta \in P_q(W)$.  If it is nonempty, then by Lemma \ref{gluing lemma}, $\theta \in P_q(W)$.  
Since $\phi > 0$, by \eqref{where psi2 is negative and positive},
$
\theta|_{W \cap \{d(z, F) < \delta\}} = \phi.
$
 The proof of the lemma is complete.

\end{proof}

\begin{lem}\cite[Lemme 1.8]{St10}\label{Lemme 1.8 of Sibony}
Let $F = X \setminus \hbox{int }J_q(X)$ be nonempty.  The restrictions of functions in $P_q(X)$ to $F$ are dense in $P_q(F)$.  
\end{lem}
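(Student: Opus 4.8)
The statement (Lemma \ref{Lemme 1.8 of Sibony}) asserts that every $g\in P_q(F)$ can be uniformly approximated on $F$ by restrictions to $F$ of functions in $P_q(X)$, where $F = X\setminus\operatorname{int}J_q(X)$. Since $P_q(F)$ is, by definition, the uniform closure on $F$ of functions that are $q$-subharmonic in a neighborhood of $F$, a standard density/triangle-inequality argument reduces us to the following concrete claim: given a neighborhood $U$ of $F$, a function $\phi\in P_q(U)$, and $\varepsilon>0$, find $\Phi\in P_q(X)$ (i.e.\ $q$-subharmonic near $X$) with $\|\Phi-\phi\|_{L^\infty(F)}<\varepsilon$. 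This is exactly the ``extension across the Jensen interior'' problem, and it is where Lemma \ref{break up of the longer lemma into a second part} does the heavy lifting.

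First I would normalize: after shrinking $U$ we may assume $\phi$ is bounded on $U$, and by adding a large constant $C$ we may assume $\phi + C>0$ on $U$ (adding a constant changes neither $q$-subharmonicity nor the approximation, since we will subtract $C$ at the end). Apply Lemma \ref{break up of the longer lemma into a second part} to $\phi+C$: this produces a function $\theta\in P_q(W)$ on a neighborhood $W$ of all of $X$ with $\theta = \phi+C$ on a neighborhood $W'$ of $F$. Set $\Phi := (\theta - C)|_X$. Then $\Phi\in P_q(X)$, and on $F\subseteq W'$ we have $\Phi = \phi$ exactly — so in fact the approximation is exact for functions $q$-subharmonic in a neighborhood of $F$, and we get $\varepsilon = 0$ for those. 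Then, for a general $g\in P_q(F)$, pick $\phi$ ($q$-subharmonic near $F$) with $\|g-\phi\|_{L^\infty(F)}<\varepsilon$, extend $\phi$ exactly as above to $\Phi\in P_q(X)$ with $\Phi|_F = \phi|_F$, and conclude $\|g - \Phi|_F\|_{L^\infty(F)} = \|g-\phi\|_{L^\infty(F)}<\varepsilon$. This closes the proof.

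\textbf{Where the difficulty sits.} Almost all of the real content has been quarantined into Lemma \ref{break up of the longer lemma into a second part} — which in turn relies on Lemma \ref{distance function is Pq} (the distance-to-$F$ function is in $P_q(X)$, via Edwards' theorem and Lemma \ref{tilde in Pq}) and on the gluing mechanism of Lemma \ref{gluing lemma}. So at the level of \emph{this} lemma the only genuine step is recognizing that Lemma \ref{break up of the longer lemma into a second part} gives not merely an $\varepsilon$-approximation but an exact extension on a neighborhood of $F$, and that the positivity hypothesis $\phi>0$ there is harmless because it can be arranged by adding a constant. The one point requiring a line of care is the reduction from ``$g\in P_q(F)$'' to ``$\phi$ $q$-subharmonic in a neighborhood of $F$'': this is immediate from the definition of $P_q(F)$ as a uniform closure, but one should state it, since the extension operator $\phi\mapsto\Phi$ is only defined on the latter class and we are invoking the triangle inequality to pass to the closure. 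No compactness or measure-theoretic input beyond what the earlier lemmas already supply is needed here.
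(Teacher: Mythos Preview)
Your proposal is correct and follows essentially the same approach as the paper: reduce to functions $q$-subharmonic in a neighborhood of $F$ via the definition of $P_q(F)$, arrange positivity by adding a constant, and then invoke Lemma \ref{break up of the longer lemma into a second part} to extend exactly across a neighborhood of $X$. The only cosmetic difference is that the paper adds the constant to the target $f_1\in P_q(F)$ before approximating, whereas you add it to the approximant $\phi$ and subtract it afterward; both are equivalent since constants lie in $P_q(X)$.
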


\begin{proof}
    Let $f_1 \in P_q(F)$.  After possibly adding a positive constant, $f_1 > 0$ on $F$. Let $\phi_j \in P_q(G_j)$ where $G_j$ is a neighborhood of $F$ such that $\phi_j > 0$ on $G_j$ and $\phi_j \to f_1$ uniformly on $F$.  By Lemma \ref{break up of the longer lemma into a second part} given $\phi_j \in P_q(G_j)$, there is a neighborhood $W$ of $X$ and a function $\theta_j \in P_q(W)$ such that $\theta_j = \phi_j$ on a neighborhood of $F$.  Since $\theta_j|_X \in P_q(X)$ and $\theta_j \to f_1$, uniformly on $F$, the lemma is proved.

\end{proof}

\begin{pro}\cite[Corollaire 1.7]{Si87}\label{pro:J(F) trivial}
Let $z_0\in F = X \setminus \hbox{int }J_q(X)$ and let $\mu$ be a $q$-Jensen measure on $X$ centered at $z_0$, then $\mu$ is also a $q$-Jensen measure for $z_0$ relative to $P_q(F)$.
\end{pro}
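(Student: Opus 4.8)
The plan is to combine the two immediately preceding lemmas with a routine limiting argument. First I would use Lemma \ref{Proposition 1.6 of Sibony}: since $z_0\notin \mathrm{int}\,J_q(X)$, every $q$-Jensen measure $\mu$ centered at $z_0$ satisfies $\mu(\mathrm{int}\,J_q(X))=0$, so $\mu$ is a probability measure supported on $F=X\setminus\mathrm{int}\,J_q(X)$. In particular $\mu$ may legitimately be regarded as a probability measure on the compact set $F$, and for any function $h$ defined on $X$ one has $\int_X h\,d\mu=\int_F h\,d\mu$. This identification is exactly what allows the statement ``$\mu$ is a $q$-Jensen measure for $z_0$ relative to $P_q(F)$'' to make sense.

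Next, fix an arbitrary $g\in P_q(F)$. By Lemma \ref{Lemme 1.8 of Sibony}, there is a sequence $f_j\in P_q(X)$ whose restrictions $f_j|_F$ converge to $g$ uniformly on $F$. Since $\mu$ is a $q$-Jensen measure on $X$ centered at $z_0$ and each $f_j\in P_q(X)$, we have $f_j(z_0)\le \int_X f_j\,d\mu=\int_F f_j\,d\mu$. Because $z_0\in F$, the left-hand side converges to $g(z_0)$; because $\mu$ is a finite measure and $f_j|_F\to g$ uniformly on $F$, the right-hand side converges to $\int_F g\,d\mu$. Passing to the limit gives $g(z_0)\le \int_F g\,d\mu$, and since $g\in P_q(F)$ was arbitrary, $\mu$ is a $q$-Jensen measure for $z_0$ relative to $P_q(F)$.

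The only genuinely nontrivial inputs are the two lemmas already established (the vanishing of $\mu$ on $\mathrm{int}\,J_q(X)$ and the density of restrictions of $P_q(X)$-functions in $P_q(F)$); everything else is a standard approximation step, so I do not anticipate any real obstacle. If pressed to identify a delicate point, it is simply the verification that $\mu$, a priori carried by $X$, may be treated as a measure on $F$ without changing any integrals — which is precisely the content of Lemma \ref{Proposition 1.6 of Sibony}.
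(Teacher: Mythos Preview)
Your proposal is correct and follows essentially the same route as the paper's proof: invoke Lemma~\ref{Proposition 1.6 of Sibony} to conclude $\mu$ is supported on $F$, then use Lemma~\ref{Lemme 1.8 of Sibony} to approximate an arbitrary $g\in P_q(F)$ by restrictions of functions in $P_q(X)$ and pass to the limit in the Jensen inequality. The paper's argument is line-for-line the same, differing only in notation.
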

\begin{proof}
By Lemma \ref{Proposition 1.6 of Sibony}, $\mu$ is supported in $F$. Moreover, given $h\in P_q(F)$, by Lemma \ref{Lemme 1.8 of Sibony}, we have that there exists a sequence of $h_j\in P_q(X)$ such that $h_j\vert_F\to h$ uniformly on $F$, so
$$h(z_0)=\lim_{j \to \infty} h_j(z_0)\leq \lim_{j \to \infty}\int_{X}h_jd\mu=\lim_{j \to \infty} \int_{F}h_jd\mu=\int_F hd\mu\;.$$
Therefore, $\mu$ is also a $q$-Jensen measure for $z_0$ relative to $P_q(F)$.
%
%
\end{proof}

\begin{proof}[Proof of Theorem \ref{thm:Countable union of Pq}]
Let, as before, $F=X\setminus\textrm{int}\, J_q(X)$ and suppose, towards a contradiction, that $F\neq \emptyset$. We write $F$ as the union of the closed sets $F\cap X_k$. By the Baire Category Theorem, at least one of these has nonempty interior relative to $F$. We find $p\in F$, $k\in\mathbb{N}$ and $r>0$ such that $F_r=\overline{\mathbb{B}(p,r)}\cap F\subseteq F\cap X_k$. Obviously, $F_r\subseteq J_q(X_k)=X_k$; therefore, $F_r=J_q(F_r)$ (i.e., Property $(P_q)$ holds for $F_r$ because it holds for $X_k$). On the other hand, by \cite[Lemma 4.12]{St10} applied to the compact set $F$, the point $p$ and the radius $r$,
$$F\cap \mathbb{B}(p,r)=F_r\setminus b\mathbb{B}(p,r)=J_q(F_r)\setminus b\mathbb{B}(p,r)=J_q(F\cap \overline{\mathbb{B}(p,r)})\setminus b\mathbb{B}(p,r)\subseteq J_q(F)\;.$$
By Proposition \ref{pro:J(F) trivial}, if $z_0\in J_q(F)$, then $z_0\in J_q(X)$, as the only $q$-Jensen measure for $z_0$ relative to $P_q(X)$ has to be the Dirac delta measure at $z_0$. Hence $F\cap \mathbb{B}(p,r)\subseteq J_q(X)$, but as $(X\setminus F) \cap \mathbb{B}(p,r)\subseteq J_q(X)$, we have that $\mathbb{B}(p,r)\cap X\subseteq J_q(X)$. Hence $p\not\in F$, which is a contradiction.
\end{proof}


\bibliographystyle{amsplain}

\bibliography{bibliography}













\end{document}